\documentclass[12pt]{article} % フォントの大きさとクラスファイルを指定
\usepackage{amscd,amsmath,amssymb,amsfonts,braket,mathrsfs,latexsym,enumerate,amsthm}
\usepackage[all]{xy}
\oddsidemargin=0.45cm
\evensidemargin=0.71cm
\textheight=23.2cm
\topmargin=-1.5cm
\textwidth=15cm

\frenchspacing

\makeatletter
\@namedef{subjclassname@2010}{%
  \textup{2010} Mathematics Subject Classification}
\makeatother

% 用語の設定
%\theoremstyle{plain}
\newtheorem{thm}{Theorem} % 定理は \begin{thm} \end{thm}
\newtheorem{lem}[thm]{Lemma} % 補題 \begin{lem} \end{lem}

\newtheorem{prop}[thm]{Proposition}

\theoremstyle{definition}
\newtheorem{rmk}[thm]{Remark}
\newtheorem{defn}[thm]{Definition}

\numberwithin{thm}{section}
\numberwithin{equation}{section}

\newcommand{\C}{\mathbb{C}}
\newcommand{\F}{\mathbb{F}}

\newcommand{\Fbar}{\overline{F}}
\newcommand{\Q}{\mathbb{Q}}

\newcommand{\Kb}{\overline{K}}
\newcommand{\kb}{\overline{k}}

\newcommand{\Fsep}{F^{\text{sep}}}
\newcommand{\rhob}{\overline{\rho}}

\newcommand{\Fab}{F^{\text{ab}}}
\newcommand{\Gab}{\mathrm{G}^{\text{ab}}}
\newcommand{\lambdaab}{\lambda^{\text{ab}}}
\newcommand{\trab}{\mathrm{tr}^{\text{ab}}}

\newcommand{\A}{\mathbb{A}}

\newcommand{\kA}{k_{\A}}
\newcommand{\KA}{K_{\A}}

\newcommand{\R}{\mathbb{R}}
\newcommand{\Z}{\mathbb{Z}}
\newcommand{\pP}{\mathbb{P}}

\newcommand{\betab}{\overline{\beta}}

\newcommand{\disc}{\mathrm{disc}\, }

\newcommand{\mfa}{\mathfrak{a}}
\newcommand{\mfp}{\mathfrak{p}}
\newcommand{\mfq}{\mathfrak{q}}
\newcommand{\mfI}{\mathfrak{I}}

\newcommand{\mfl}{\mathfrak{l}}

\newcommand{\Atil}{\widetilde{A}}

\newcommand{\cO}{\mathcal{O}}

\newcommand{\cL}{\mathcal{L}}
\newcommand{\cM}{\mathcal{M}}
\newcommand{\cN}{\mathcal{N}}
\newcommand{\cFR}{\mathcal{FR}}

\newcommand{\cS}{\mathcal{S}}
\newcommand{\cT}{\mathcal{T}}

\newcommand{\Ram}{\mathbf{Ram}}

\newcommand{\id}{\mathrm{id}}

\newcommand{\Frob}{\mathrm{Frob}}

\newcommand{\Gal}{\mathrm{Gal}}
\newcommand{\G}{\mathrm{G}}
\newcommand{\M}{\mathrm{M}}
\newcommand{\N}{\mathrm{N}}
\newcommand{\GL}{\mathrm{GL}}

\newcommand{\End}{\mathrm{End}}
\newcommand{\Aut}{\mathrm{Aut}}

\newcommand{\im}{\mathrm{Im}\, }
\newcommand{\Norm}{\mathrm{Norm}}

\newcommand{\tr}{\mathrm{tr}}

\newcommand{\Spec}{\mathrm{Spec}}

\newcommand{\cf}{cf.\ }
\newcommand{\inj}{\hookrightarrow}

\newcommand{\resp}{resp.\ }

\newcommand{\ch}{\mathrm{char}\,}

% 文章始まり

\begin{document}

\title{Algebraic points on Shimura curves of $\Gamma_0(p)$-type (II)}
\author{Keisuke Arai}
\date{}
%\address[Keisuke Arai]
%{Department of Mathematics, School of Engineering,
%Tokyo Denki University,
%2-2 Kanda-Nishiki-cho, Chiyoda-ku, Tokyo, Japan 
%101-8457}
%\email{araik@mail.dendai.ac.jp}

%\address[Fumiyuki Momose]
%{Department of Mathematics, Faculty of Science and Engineering,
%Chuo University,
%1-13-27 Kasuga, Bunkyo-ku, Tokyo 112-8551, Japan}
%\email{momose@math.chuo-u.ac.jp}

%\date{}

%\pagestyle{plain}
%
%

%
%

%\subjclass[2010]{Primary 11G18, 14G05; Secondary 11G10, 11G15}

%\keywords{rational points, Shimura curves, QM-abelian surfaces}

\maketitle

%\begin{center}
%\textit{To the memory of Fumiyuki Momose}
%\end{center}

\begin{abstract}

%In this article we study algebraic points on Shimura curves
%of $\Gamma_0(p)$-type.
In the previous article,
we classified the characters associated to algebraic points
on Shimura curves of $\Gamma_0(p)$-type, and over a quadratic field
we showed that
there are at most elliptic points on such a Shimura curve
for every sufficiently large prime number $p$.
In this article, we get
a similar result for points over number fields of higher degree
on Shimura curves of $\Gamma_0(p)$-type.

%As an application we show the irreducibility of the mod-$p$ Galois representation
%associated to a QM-abelian surface, together with some finiteness on abelian varieties.

\end{abstract}

%\noindent
%2010 \textit{Mathematics Subject Classification.}
%Primary 11G18, 14G05; Secondary 11G10, 11G15.

%\tableofcontents

\vspace{5mm}
\noindent
{\bf Notation}

\vspace{1mm}

For an integer $n\geq 1$ and a commutative group (or a commutative group scheme) $G$,
let $G[n]$ denote the kernel of multiplication by $n$ in $G$.
For a field $F$,
let $\ch F$ denote the characteristic of $F$,
let $\Fbar$ denote an algebraic closure of $F$,
let $\Fsep$ (\resp $\Fab$) denote the separable closure
(\resp the maximal abelian extension) of $F$ inside $\Fbar$,
and let $\G_F=\Gal(\Fsep/F)$, $\Gab_F=\Gal(\Fab/F)$.
For a prime number $p$ and a field $F$ of characteristic $0$, let
$\theta_p:\G_F\longrightarrow\F_p^{\times}$ denote the mod $p$ cyclotomic character.
For a number field $k$, 
let $h_k$ denote the class number of $k$;
fix an inclusion $k\hookrightarrow\C$
and take the algebraic closure $\kb$ inside $\C$;
let $k_v$ denote the completion of $k$ at $v$
where $v$ is a place (or a prime) of $k$;
let $k_{\A}$ denote the ad\`{e}le ring of $k$;
and let $\Ram (k)$ denote the set of prime numbers which are ramified in $k$.
For a number field or a local field $k$, let $\cO_k$ denote
the ring of integers of $k$.
For a scheme $S$ and an abelian scheme $A$ over $S$,
let $\End_S(A)$ denote the ring of endomorphisms of $A$ defined over $S$.
If $S=\Spec(F)$ for a field $F$ and if $F'/F$ is a field extension,
simply put
$\End_{F'}(A):=\End_{\Spec(F')}(A\times_{\Spec(F)}\Spec(F'))$
and
$\End(A):=\End_{\Fbar}(A)$.
Let $\Aut(A):=\Aut_{\Fbar}(A)$ be the group
of automorphisms of $A$ defined over $\Fbar$.
For a prime number $p$ and an abelian variety $A$ over a field $F$,
let
$\displaystyle T_pA:=\lim_{\longleftarrow}A[p^n](\Fbar)$
be the $p$-adic Tate module of $A$,
where the inverse limit is taken with respect to
multiplication by $p$ : $A[p^{n+1}](\Fbar)\longrightarrow A[p^n](\Fbar)$.

\section{Introduction}
\label{intro}

Let $B$ be an indefinite quaternion division algebra over $\Q$.
Let
$$d:=\disc B$$
be the discriminant of $B$.
Then $d$ is the product of an even number of distinct prime numbers, and $d>1$.
Fix a maximal order $\cO$ of $B$.
For each prime number $p$ not dividing $d$, fix an isomorphism
\begin{equation}
\label{OM2}
\cO\otimes_{\Z}\Z_p\cong\M_2(\Z_p) %\nonumber
%\leqno(1.1)
\end{equation}
of $\Z_p$-algebras.

\begin{defn}
\label{defqm}
%\rm

(\cf \cite[p.591]{Bu})
Let $S$ be a scheme.
%where $d$ is invertible.
A QM-abelian surface by $\cO$
over $S$ is a pair $(A,i)$ where $A$ is an 
abelian surface over $S$ (i.e. $A$ is an abelian scheme over $S$ 
of relative dimension $2$), and 
$i:\cO\inj\End_S(A)$ 
is an injective ring homomorphism (sending $1$ to $\id$). 
We consider that $A$ has a left $\cO$-action.
We sometimes omit ``by $\cO$" and simply write ``a QM-abelian surface".

\end{defn}

%\noindent
Let $M^B$ be the coarse moduli scheme over $\Q$ parameterizing isomorphism classes
of QM-abelian surfaces by $\cO$.
The notation $M^B$ is permissible
although we should write $M^{\cO}$ instead of $M^B$;
for even if we replace $\cO$ by another maximal order $\cO'$,
we have a natural isomorphism
$M^{\cO}\cong M^{\cO'}$
since $\cO$ and $\cO'$ are conjugate in $B$.
Then $M^B$ is a proper smooth curve over $\Q$, called a Shimura curve.
For a prime number $p$ not dividing $d$,
let $M_0^B(p)$
% (or simply $M_0(p)$) 
be the coarse moduli scheme over $\Q$ parameterizing isomorphism classes
of triples $(A,i,V)$ where $(A,i)$ is a QM-abelian surface by $\cO$
and $V$ is a left $\cO$-submodule of $A[p]$ with $\F_p$-dimension $2$.
%Here $A[p]$ is the kernel of multiplication by $p$ in $A$.
Then $M_0^B(p)$ is a proper smooth curve over $\Q$, which we call a
Shimura curve of $\Gamma_0(p)$-type.
We have a natural map $$\pi^B(p):M_0^B(p)\longrightarrow M^B$$
over $\Q$ defined by $(A,i,V)\longmapsto (A,i)$.

For real points on $M^B$, we know the following.

\begin{thm}[{\cite[Theorem 0, p.136]{Sh}}]
\label{M^B(R)}

We have $M^B(\R)=\emptyset$.

\end{thm}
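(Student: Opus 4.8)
The plan is to derive a contradiction from the assumption that $M^B$ has a real point by analysing how complex conjugation interacts with the quaternionic multiplication on rational homology. Suppose $P\in M^B(\R)$. Since $M^B$ is a coarse moduli space for QM-abelian surfaces, $P$ corresponds to a pair $(A,i)$ over $\C$ whose isomorphism class is fixed by complex conjugation; concretely there is an isomorphism $\phi\colon (A,i)\xrightarrow{\sim}(\overline{A},\overline{i})$ of QM-abelian surfaces, where $(\overline{A},\overline{i})$ is the complex conjugate. First I would pass to $V:=H_1(A(\C),\Q)$. Because $d=\disc B>1$, the algebra $B$ is a division algebra, so $V$ — a $B$-module of $\Q$-dimension $4=\dim_\Q B$ — is free of rank one; fixing such an identification, $B$ acts on $V$ by left multiplication and $\End_B(V)=B^{\mathrm{op}}$ acts by right multiplication. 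The complex structure $J$ on $V\otimes\R$ coming from $A$ commutes with the $B$-action, hence is right multiplication by some $j\in (B\otimes\R)^\times$ with $j^2=-1$.

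Next I would package $\phi$ into a single $B$-linear operator on $V$. Writing $\kappa\colon A(\C)\to\overline{A}(\C)$ for the (anti-holomorphic) conjugation map and $\overline{\kappa}$ for its inverse, set $T:=\overline{\kappa}_*\circ\phi_*\in\GL(V)$. A short check shows $T$ is $B$-linear — the conjugate action $\overline{i}$ is carried to $i$ by $\kappa$, while $\phi$ intertwines $i$ with $\overline{i}$ — so $T$ is right multiplication by some $t\in B^\times$. The key geometric input is the behaviour with respect to $J$: since $\phi$ is holomorphic while $\kappa$ is anti-holomorphic, one gets $TJ=-JT$, that is, $t$ and $j$ anticommute. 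Here the hypothesis that $B$ is indefinite enters decisively: fixing an isomorphism $B\otimes\R\cong\M_2(\R)$ carrying $j$ to $\left(\begin{smallmatrix}0&-1\\1&0\end{smallmatrix}\right)$, the elements anticommuting with $j$ are exactly those of the shape $\left(\begin{smallmatrix}a&b\\b&-a\end{smallmatrix}\right)$, whose determinant is $-(a^2+b^2)<0$. Thus $\Nrd(t)<0$.

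Finally I would pin down the arithmetic nature of $t$. Consider $\psi:=\overline{\phi}\circ\phi\in\Aut(A,i)$; using $\overline{\phi}=\overline{\kappa}\,\phi\,\overline{\kappa}$ on complex points one computes $\psi_*=T^2$, so $t^2$ represents $\psi$. Now $\Aut(A,i)$ is finite — it is the unit group of an order in the centraliser of $B$ in $\End^0(A)$, which for an abelian surface is $\Q$ or an imaginary quadratic field — so $\psi$, hence $t^2$, hence $t$, has finite order. Therefore $t$ is a root of unity lying in $\Q$ or in an imaginary quadratic subfield of $B$, whence $\Nrd(t)=1>0$, contradicting $\Nrd(t)<0$. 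This forces $M^B(\R)=\emptyset$. The main obstacle is the sign computation of the previous paragraph: everything hinges on showing that a $B$-linear operator anticommuting with the complex structure has negative reduced norm, which is precisely where indefiniteness ($B\otimes\R\cong\M_2(\R)$) is used, and which must be reconciled with the positivity forced by finite order. A secondary point needing care is the coarse-moduli step, namely that a real point yields an isomorphism $(A,i)\cong(\overline{A},\overline{i})$ over $\C$ rather than a model over $\R$; the argument above is deliberately arranged to avoid any descent and to work directly with this isomorphism.
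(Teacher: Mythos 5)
Your proposal is correct, but it does not follow the paper's route, because the paper supplies no argument at all: the statement is imported wholesale from Shimura [Sh, Theorem 0, p.\ 136], whose proof is a fixed-point analysis on the uniformization $M^B(\C)\cong\Gamma\backslash\h$ (and is proved there in the much greater generality of arithmetic quotients of bounded symmetric domains). In Shimura's argument, complex conjugation on the canonical model lifts to an antiholomorphic map $z\mapsto\mu(\overline{z})$ with $\mu\in B^{\times}$ of \emph{negative} reduced norm; a real point would yield $\alpha\in B^{\times}$ with $\Nrd(\alpha)<0$ and $\alpha(\overline{z})=z$, which forces $\tr(\alpha)=0$, hence $\alpha^2=-\Nrd(\alpha)\in\Q_{>0}$, and after normalizing one gets $\alpha^2=1$ with $\alpha\ne\pm1$, i.e.\ zero divisors in the division algebra $B$ --- this is where $d>1$ enters. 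Your moduli-theoretic translation replaces $\Gamma\backslash\h$ by $H_1(A(\C),\Q)$ as a free rank-one $B$-module and replaces $\mu$ by the operator $T=\overline{\kappa}_*\circ\phi_*$, and I checked the steps: $T$ is indeed $B$-linear, hence right multiplication by $t\in B^{\times}$; the holomorphic/antiholomorphic mismatch gives $TJ=-JT$; the anticommutant of $\left(\begin{smallmatrix}0&-1\\1&0\end{smallmatrix}\right)$ in $\M_2(\R)$ consists of matrices $\left(\begin{smallmatrix}a&b\\b&-a\end{smallmatrix}\right)$ of determinant $-(a^2+b^2)<0$, so $\Nrd(t)<0$; the identity $\psi_*=T^2$ with $\psi=\overline{\phi}\circ\phi\in\Aut_{\cO}(A)$ is right (using $\overline{\phi}=\kappa^{-1}\circ\phi\circ\kappa^{-1}$ on points), and finiteness of $\Aut_{\cO}(A)$ in characteristic $0$ is exactly what the paper's Section 3 records; finally, torsion elements of $B^{\times}$ lie in $\Q$ or an imaginary quadratic subfield and have reduced norm $1$, contradicting $\Nrd(t)<0$. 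Each hypothesis is used in the right place: indefiniteness for the sign of the determinant of the anticommutant, and $d>1$ (division) both for rank-one freeness of $H_1$ and for the positivity conclusion. What your version buys is a self-contained proof at the level of the moduli interpretation, avoiding canonical models and the explicit description of conjugation on $\Gamma\backslash\h$, at the cost of Shimura's generality. One cosmetic streamlining: since $t$ anticommutes with $j$ you already have $\tr(t)=0$, so $t^2=-\Nrd(t)\in\Q_{>0}$, and finite order of $\psi$ then forces $t^2=1$, giving $(t-1)(t+1)=0$ in the division algebra $B$ --- which exhibits your endgame and Shimura's as literally the same contradiction.
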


In the previous article, we showed that there are few points over quadratic fields
on $M_0^B(p)$ for every sufficiently large prime number $p$,
which is an analogue of the study of points on the modular curve $X_0(p)$
(\cite{Ma}, \cite{Mo};
for related topics, see \cite{A2}).

\begin{thm}[{\cite[Theorem 1.3]{AM}}]
\label{prevthm}

Let $k$ be a quadratic field which is not an imaginary quadratic field of
class number one.
Then there is a finite set $\cN(k)$ of prime numbers depending on $k$
%constant $C(B,K)$ depending on $B$ and $K$
which satisfies the following.

(1)
If $B\otimes_{\Q}k\cong\M_2(k)$, then $M_0^B(p)(k)=\emptyset$ holds
for every prime number $p\not\in\cN(k)$ not dividing $d$.

(2)
If $B\otimes_{\Q}k\not\cong\M_2(k)$, then
$M_0^B(p)(k)\subseteq\{\text{elliptic points of order $2$ or $3$}\}$
holds for every prime number $p\not\in\cN(k)$ not dividing $d$.

\end{thm}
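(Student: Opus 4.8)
The plan is to follow the Mazur–Momose strategy of attaching a mod-$p$ Galois character to a putative $k$-point and then constraining that character so tightly, via class field theory and the theory of finite flat group schemes, that it can only exist for $p$ in a finite set. First I would fix a non-elliptic point $P\in M_0^B(p)(k)$ and represent it by a triple $(A,i,V)$ over $k$, where $(A,i)$ is a QM-abelian surface and $V\subset A[p]$ is a $\G_k$-stable $\cO$-submodule of $\F_p$-dimension $2$. Using the fixed isomorphism $\cO\otimes_\Z\Z_p\cong\M_2(\Z_p)$ from (\ref{OM2}), the idempotents of $\M_2(\F_p)$ split $A[p]\cong W\oplus W$ as $\cO/p$-modules, where $W$ is a two-dimensional $\F_p$-vector space carrying a representation $\rho:\G_k\to\GL(W)\cong\GL_2(\F_p)$, and the submodule $V$ corresponds to a $\G_k$-stable line $L\subset W$. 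Thus $\rho$ is reducible,
\[
\rho\sim\begin{pmatrix}\lambda & * \\ 0 & \lambda'\end{pmatrix},
\]
and the Weil pairing attached to a polarization of $(A,i)$ gives $\lambda\lambda'=\theta_p$. The character $\lambda:\G_k\to\F_p^\times$ recording the action on $L$ is the central object.

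The crucial structural input is that $B$ is a division algebra, so $M^B$ is proper and there are no cusps: every QM-abelian surface has potential good reduction at every finite place of $k$. First I would analyze $\lambda$ away from $p$. By the criterion of N\'eron--Ogg--Shafarevich together with potential good reduction, at each $v\nmid p$ the inertia group $I_v$ acts on $T_pA$ through a finite quotient whose order is bounded independently of $p$; hence $\lambda|_{I_v}$ has order dividing a fixed integer coming from the finite list of automorphism groups of abelian surfaces. Next, at $v\mid p$ I would invoke Raynaud's classification of finite flat group schemes of type $(p,\dots,p)$ and the description of inertia by fundamental characters: since $L$ extends to a finite flat subgroup scheme of the $p$-divisible group of $A$, the restriction $\lambda|_{I_v}$ is forced to be $\theta_p^{a}$ for $a$ in an explicit finite set. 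Raising $\lambda$ to a fixed power $n$ therefore yields a character unramified outside $p$ and equal to a fixed power of $\theta_p$ on inertia at $p$; by global class field theory and finiteness of the class number $h_k$, this pins $\lambda$ down to one of finitely many explicit types, each built from $\theta_p$ and a character of bounded order factoring through a ray class group of $k$. This is exactly the character classification announced in the abstract.

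Finally I would eliminate the surviving characters for large $p$. For a prime $\ell$ of good reduction, $\tr\rho(\Frob_\ell)$ equals the reduction mod $p$ of the trace of $\Frob_\ell$ on $W$, an algebraic integer bounded by $2\sqrt{\ell}$ in absolute value; comparing this with the value $\lambda(\Frob_\ell)+\lambda'(\Frob_\ell)$ predicted by each type produces a nonzero integer, depending only on $k$ and $\ell$, that must be divisible by $p$. Choosing suitable auxiliary primes $\ell$ then forces $p$ into a finite set $\cN(k)$, except precisely when the class-group contribution degenerates --- which is why imaginary quadratic fields of class number one must be excluded. For the dichotomy: if $k$ splits $B$, the characters that could survive correspond to CM points whose field of definition is incompatible with $k\hookrightarrow\C$ splitting $B$, so no $k$-point remains and $M_0^B(p)(k)=\emptyset$, giving (1); if $k$ does not split $B$, the residual possibilities are CM points, and among these only those with extra automorphisms --- the elliptic points of order $2$ or $3$ --- can be $k$-rational, giving (2).

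The hard part will be the local analysis at $p$ together with the global bookkeeping of the second step: extracting from the theory of Raynaud and Fontaine the precise finite list of admissible exponents $a$ in the presence of the quaternionic multiplication, and then tracking the exact dependence on $k$ (through $h_k$ and $\Ram(k)$) needed to guarantee that $\cN(k)$ is genuinely finite. The elimination step is also delicate, since one must produce enough auxiliary primes with effective bounds and must isolate the imaginary-quadratic-class-number-one case as the unique obstruction, which is where the strategy is most fragile.
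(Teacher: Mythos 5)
This theorem is quoted in the paper from the previous article \cite{AM}; the present paper contains no proof of it, but it reproduces the relevant machinery in Sections \ref{fieldofdefinition}--\ref{charII}, and your overall skeleton (a mod-$p$ character attached to $V$, inertia analysis away from $p$ by potential good reduction, Raynaud-type analysis at $p$, class-field-theoretic pinning using $h_k$, elimination by comparing with Frobenius eigenvalues at auxiliary primes, and the class-number-one exclusion entering through Hilbert class fields in the ``type 3'' case) is indeed the Mazur--Momose route that \cite{AM} follows. However, there are two genuine gaps. First, your opening step --- ``represent $P$ by a triple $(A,i,V)$ over $k$'' --- is not available: $M_0^B(p)$ is only a coarse moduli scheme, and by Jordan's theorem (Proposition \ref{fieldMB}) the pair $(A_x,i_x)$ can be taken to be defined over $k$ \emph{if and only if} $B\otimes_{\Q}k\cong\M_2(k)$. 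So in case (2) your first step fails for \emph{every} point, and even in case (1) it fails when $\Aut(x)=\{\pm 1\}$ and $\Aut(x')\cong\Z/4\Z$. The actual argument must control the descent obstruction $[c_x]\in H^2(\G_k,\Aut(x))$, pass to a carefully chosen quadratic extension $K/k$ (Proposition \ref{fieldM0Bp}(2) and Lemma \ref{fieldofdef}, with primes above $p$ inert and the primes of $\cS$ ramified in $K/k$), and replace your $\lambda$ by the transfer character $\varphi=\lambdaab\circ\tr_{K/k}:\G_k\to\F_p^{\times}$ of (\ref{phi}); the classification (Theorem \ref{type23phi}) and the Frobenius-eigenvalue elimination at a ramified prime above an auxiliary split $q$ (Lemmas \ref{(b+bb)^2}--\ref{BQ(-q)M2}) are carried out for $\varphi$, not for a character of $\G_k$ cut out of a $k$-rational triple. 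Your proposal has no mechanism for this descent, and it is precisely where the quadratic field $K$ and the cocycle bookkeeping do real work.

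Second, you fix a \emph{non-elliptic} point at the outset, so your argument can at best yield $M_0^B(p)(k)\subseteq\{\text{elliptic points}\}$ in both cases; it cannot deliver the emptiness asserted in (1). The established route disposes of elliptic points in case (1) through the automorphism groups: when $B\otimes_{\Q}k\cong\M_2(k)$ and $\Aut(x)\ne\{\pm 1\}$ or $\Aut(x')\not\cong\Z/4\Z$, Proposition \ref{fieldM0Bp}(1) descends the full triple to $k$, so $\rhob_{A,p}$ is reducible over $k$ and the character classification (the analogue of Theorem \ref{type23nu}) gives the contradiction directly. Your closing explanation of the dichotomy --- that in case (1) the surviving characters ``correspond to CM points whose field of definition is incompatible with $k$ splitting $B$'' --- is not the mechanism and is unsubstantiated; the true dichotomy is whether the descent obstruction vanishes, which is what separates (1) from (2). (A smaller omission: for real quadratic $k$ one gets (1) and (2) for free from Shimura's Theorem \ref{M^B(R)}, $M^B(\R)=\emptyset$; the substance of the theorem is the imaginary quadratic case, where your appeal to auxiliary primes must also be made compatible with the splitting behavior in $k$, as in Lemmas \ref{q/p-1} and \ref{BQ(-q)M2}.)
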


We can identify $M_0^B(p)(\C)$ with a quotient of the upper half-plane,
and we use the notion of "elliptic points" in this context.
We generalize Theorem \ref{prevthm} to points over number fields of higher degree
on $M_0^B(p)$.
The following is the main result of this article.

\begin{thm}
\label{mainthm}

Let $k$ be a finite Galois extension of $\Q$ which does not contain
the Hilbert class field of any imaginary quadratic field.
Then there is a finite set $\cL(k)$
of prime numbers depending on $k$ which satisfies the following.
Assume that there is a prime number $q$ which splits completely in $k$ and satisfies
$B\otimes_{\Q}\Q(\sqrt{-q})\not\cong\M_2(\Q(\sqrt{-q}))$, and
let $p>4q$ be a prime number which also satisfies
$p\geq 11$, $p\ne 13$, $p\nmid d$ and $p\not\in\cL(k)$.

(1)
If $B\otimes_{\Q}k\cong\M_2(k)$, then $M_0^B(p)(k)=\emptyset$.

(2)
If $B\otimes_{\Q}k\not\cong\M_2(k)$, then
$M_0^B(p)(k)\subseteq\{\text{elliptic points of order $2$ or $3$}\}$.

\end{thm}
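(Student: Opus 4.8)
The plan is to adapt the Galois-representation method behind Theorem \ref{prevthm}, using the completely split prime $q$ in place of the quadratic field to supply local information and using the hypothesis on $k$ to control complex multiplication. First I would attach Galois data to a point $P\in M_0^B(p)(k)$. Since $M^B(\R)=\emptyset$ (Theorem \ref{M^B(R)}), $k$ has no real place and the field of moduli is a field of definition, so after descent $P$ yields a QM-abelian surface $(A,i)$ over $k$ together with a $\G_k$-stable left $\cO$-submodule $V\subset A[p]$ of $\F_p$-dimension $2$. The Morita equivalence $\cO/p\cO\cong\M_2(\F_p)$ turns $A[p]$ into (copies of) a rank-two $\F_p$-space, producing $\rho:\G_k\to\GL_2(\F_p)$ with $\det\rho=\theta_p$; the submodule $V$ is exactly a $\G_k$-stable line, so $\rho$ is reducible and, up to conjugacy, $\rho=\left(\begin{smallmatrix}\chi&*\\0&\chi'\end{smallmatrix}\right)$ with $\chi\chi'=\theta_p$. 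Here $\chi$ is the isogeny character of $P$.

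Next I would remove complex multiplication. A QM-abelian surface has potentially good reduction everywhere, so $\chi$ is unramified outside $p\cdot\Ram(k)$ and its restriction to inertia at a place over $p$ is an explicit power of $\theta_p$; this is the classification of isogeny characters of \cite{AM}, valid under $p\ge 11$, $p\ne 13$, $p\nmid d$, and in particular it yields $\chi^{12}=\theta_p^{s}$ for an explicit $s$. If $P$ has complex multiplication by an imaginary quadratic field $L$, its field of definition contains the class field of $L$ attached to the CM order, which by hypothesis cannot lie in $k$; the only possible survivors are the elliptic points of order $2$ and $3$ (CM by $\Q(\sqrt{-1})$ and $\Q(\sqrt{-3})$), treated at the end. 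Thus I may assume $P$ is non-CM, so that $\rho$ would have large image were it not for the line $V$.

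The heart of the argument is the local analysis at $q$. Fix a place $v\mid q$; as $q$ splits completely, $k_v=\Q_q$, the reduction $\overline{A}_v$ lives over $\F_q$, and $\Frob_v$ acts on $\rho$ with characteristic polynomial $T^2-a_qT+q$, where $|a_q|\le 2\sqrt q$. I would first show $\overline{A}_v$ is ordinary: if it were supersingular then $a_q=0$, the Frobenius endomorphism $\pi_v$ would generate $\Q(\sqrt{-q})$ inside $\End(\overline{A}_v)$ and commute with $i(\cO)$, and since the centralizer of a maximal subfield is that field itself this forces $\Q(\sqrt{-q})$ to split $B$, i.e.\ $B\otimes_\Q\Q(\sqrt{-q})\cong\M_2(\Q(\sqrt{-q}))$ --- contrary to hypothesis. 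Hence $a_q\ne 0$. Because $p>4q>|a_q^2-4q|$, the two eigenvalues $u,\bar u=q/u$ of $\Frob_v$ are distinct modulo $p$, so $V$ reduces to a single $\Frob_v$-eigenline and $\chi(\Frob_v)\equiv u\pmod p$. Feeding this into $\chi^{12}=\theta_p^{s}$ gives $u^{12}\equiv q^{s}\pmod p$; but $|u|=\sqrt q$ shows the identity $u^{12}=q^{s}$ can hold only when $s=6$ and $a_q=0$, both excluded, so $p$ divides the nonzero integer $N_q:=\Norm_{\Q(u)/\Q}(u^{12}-q^{s})$, which depends only on $q$ and is bounded by a power of $q$.

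Finally I would let $\cL(k)$ consist of the primes dividing $N_q$, the finitely many primes excluded by the classification of \cite{AM}, and the primes ramified in $k$. For $p\notin\cL(k)$ meeting the stated bounds, $p\mid N_q$ is impossible, so $\rho$ cannot be reducible for a non-CM surface, contradicting the existence of $V$; only the elliptic points of order $2$ and $3$ can remain, and their descent to $k$ is governed by a class in $\Br(k)$ --- the associated QM-abelian surface is defined over $k$ exactly when $B\otimes_\Q k$ is nonsplit. Thus in case (1), where $B\otimes_\Q k\cong\M_2(k)$, even these disappear and $M_0^B(p)(k)=\emptyset$, while in case (2) they may persist, giving $M_0^B(p)(k)\subseteq\{\text{elliptic points of order $2$ or $3$}\}$. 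I expect the third step to be the main obstacle: pinning down the reduction type at $q$ through the centralizer argument and converting the classification into a nonvacuous congruence; the secondary, genuinely new difficulty in the higher-degree setting is the bookkeeping of fields of definition of CM points over a general Galois field $k$.
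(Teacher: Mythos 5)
Your overall strategy (classify the mod-$p$ character, use the completely split prime $q$ for a local Frobenius contradiction, use the Hilbert-class-field hypothesis to kill the CM/type-3 case) is indeed the paper's strategy, but your write-up has two genuine gaps. The first is the descent step. You assert that since $M^B(\R)=\emptyset$ the field of moduli is a field of definition and $P$ yields a QM-abelian surface over $k$ itself. This is false: by Jordan's criterion (Proposition \ref{fieldMB}), $(A_x,i_x)$ can be defined over $k$ \emph{if and only if} $B\otimes_{\Q}k\cong\M_2(k)$ --- note that at the end of your sketch you state this criterion backwards (``defined over $k$ exactly when $B\otimes_\Q k$ is nonsplit''). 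So in case (2), and even in case (1) when $\Aut(x)=\{\pm 1\}$ and $\Aut(x')\cong\Z/4\Z$ (the triple $(A,i,V)$ need not descend even when the pair does; Proposition \ref{fieldM0Bp}), your representation $\rho:\G_k\to\GL_2(\F_p)$ simply does not exist. This is the genuinely new difficulty of the paper: one descends only to a quadratic extension $K/k$, gets a character $\lambda:\G_K\to\F_p^\times$, and replaces it by $\varphi:=\lambdaab\circ\tr_{K/k}:\G_k\to\F_p^\times$ via the transfer map, choosing $K$ (Lemma \ref{fieldofdef} and the conditions of Lemma \ref{type2lambda}) so that the primes above $p$ are inert and $\mfq$ is ramified in $K/k$. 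The entire Section \ref{charI} classification is for this transfer character, with exponents ($\varphi^{12}=\theta_p^{12}$, $\delta^{24}$, $\cN_1(k)$) different from the split case ($\nu^{12}=\theta_p^6$, $\cN'_1(k)$); your sketch collapses both into one character $\chi$ over $k$ and so cannot cover cases (1-ii) and (2) at all.

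The second gap is quantitative and breaks the quantifier structure of the theorem. You put the prime divisors of $N_q=\Norm_{\Q(u)/\Q}(u^{12}-q^s)$ into $\cL(k)$; but $N_q$ depends on $q$ (and can be of size roughly $q^{12}\gg p$, so the bound $p>4q$ does not make $p\mid N_q$ impossible), whereas the theorem requires $\cL(k)$ to depend on $k$ alone, with $q$ arbitrary subject only to $p>4q$. The paper avoids this as follows: in type 2 one gets $\psi(\Frob_M)^6=1$ with $\psi=\lambda\theta_p^{-(p+1)/4}$ and, crucially, $\left(\frac{q}{p}\right)=-1$, i.e.\ $q^{(p-1)/2}\equiv-1\bmod p$ (Lemmas \ref{q/p-1}, \ref{q/p-1nu}); combining these yields $(\beta+\betab)^2\equiv 3q$ or $0\bmod p$ (Lemma \ref{(b+bb)^2}), and since $(\beta+\betab)^2<4q<p$, the congruences become exact equalities, forcing $\beta+\betab=0$ or $q=3$, $\beta+\betab=\pm3$ (Lemma \ref{b+bb}). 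Both branches (not just the supersingular one, which you discard too early --- in the paper it is one of the two terminal cases, not a preliminary reduction) then give $B\otimes_{\Q}\Q(\sqrt{-q})\cong\M_2(\Q(\sqrt{-q}))$ by Jordan's results, contradicting the hypothesis for \emph{every} $p>4q$, with no $q$-dependent primes entering $\cL(k)$; the only $k$-dependent exclusions are $\cN_1(k)\cup\cN'_1(k)$, built from class-group generators of $k$. Without the quadratic-residue input and the archimedean bound, your ordinary-case congruence $u^{12}\equiv q^s\bmod p$ cannot be upgraded to an impossibility.
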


\section{Galois representations associated to QM-abelian surfaces (generalities)}
\label{QM}

We consider the Galois representation associated to a QM-abelian surface.
Take a prime number $p$ not dividing $d$.
Let $F$ be a field with $\ch F\ne p$.
Let $(A,i)$ be a QM-abelian surface by $\cO$ over $F$.
We have isomorphisms of $\Z_p$-modules:
$$\Z_p^4\cong T_pA\cong\cO\otimes_{\Z}\Z_p\cong\M_2(\Z_p).$$
The middle is also an isomorphism of left $\cO$-modules;
the last is also an isomorphism of $\Z_p$-algebras (which is fixed in (\ref{OM2})).
We sometimes identify these $\Z_p$-modules.
Take a $\Z_p$-basis
$$e_1=\left(\begin{matrix} 1&0 \\ 0&0\end{matrix}\right),\ 
e_2=\left(\begin{matrix} 0&0 \\ 1&0\end{matrix}\right),\ 
e_3=\left(\begin{matrix} 0&1 \\ 0&0\end{matrix}\right),\ 
e_4=\left(\begin{matrix} 0&0 \\ 0&1\end{matrix}\right)$$
of $\M_2(\Z_p)$.
Then the image of the natural map
$$\M_2(\Z_p)\cong\cO\otimes_{\Z}\Z_p\hookrightarrow\End(T_pA)\cong\M_4(\Z_p)$$
lies in
$\left\{\left(\begin{matrix} 
X\ &0 \\ 0\ &X
\end{matrix}\right)
\Bigg|\, X\in\M_2(\Z_p)\right\}$.
%
%Let $\Fsep$ be a separable closure of $F$, and
%let $\G_F=\Gal(\Fsep/F)$ be the absolute Galois group of $F$.
The action of the Galois group $\G_F$ on $T_pA$ induces a representation
$$\rho:\G_F\longrightarrow\Aut_{\cO\otimes_{\Z}\Z_p}(T_pA)\subseteq\Aut(T_pA)
\cong\GL_4(\Z_p),$$
where
$\Aut_{\cO\otimes_{\Z}\Z_p}(T_pA)$
is the group of automorphisms of $T_pA$
commuting with the action of $\cO\otimes_{\Z}\Z_p$.
We often identify
$\Aut(T_pA)=\GL_4(\Z_p)$.
The above observation implies
$$\Aut_{\cO\otimes_{\Z}\Z_p}(T_pA)=
\left\{\left(\begin{matrix} sI_2&tI_2 \\ uI_2&vI_2\end{matrix}\right) \Bigg|\,
\left(\begin{matrix} s&t \\ u&v\end{matrix}\right)\in\GL_2(\Z_p)\right\},$$
where 
$I_2=\left(\begin{matrix} 1\ &0 \\ 0\ &1\end{matrix}\right)$.
Then the Galois representation $\rho$ factors as
$$\rho:\G_F\longrightarrow
\left\{\left(\begin{matrix} sI_2&tI_2 \\ uI_2&vI_2\end{matrix}\right) \Bigg|\,
\left(\begin{matrix} s&t \\ u&v\end{matrix}\right)\in\GL_2(\Z_p)\right\}
\subseteq\GL_4(\Z_p).$$
Let 
$$\rhob:\G_F\longrightarrow
\left\{\left(\begin{matrix} sI_2&tI_2 \\ uI_2&vI_2\end{matrix}\right) \Bigg|\,
\left(\begin{matrix} s&t \\ u&v\end{matrix}\right)\in\GL_2(\F_p)\right\}
\subseteq\GL_4(\F_p)$$
be the reduction of $\rho$ modulo $p$.
Let
\begin{equation}
\label{rhobar}
\rhob_{A,p}:\G_F\longrightarrow\GL_2(\F_p) %\nonumber
%\leqno(2.1)
\end{equation}
denote the Galois representation induced from $\rhob$ by
$``\left(\begin{matrix} s&t \\ u&v\end{matrix}\right)"$,
so that we have
$\rhob_{A,p}(\sigma)=\left(\begin{matrix} s&t \\ u&v\end{matrix}\right)$
if $\rhob(\sigma)=\left(\begin{matrix} sI_2&tI_2 \\ uI_2&vI_2\end{matrix}\right)$
for $\sigma\in\G_F$.

%

%For a separable closure $\Fsep$ of $F$,
Suppose that $A[p](\Fsep)$ has a left $\cO$-submodule $V$ with $\F_p$-dimension $2$
which is stable under the action of $\G_F$.
We may assume
$V=\F_p e_1\oplus\F_p e_2
=\left\{\left(\begin{matrix} *&0 \\ *&0 \end{matrix}\right)\right\}$.
Since $V$ is stable under the action of $\G_F$, we find
$\rhob_{A,p}(\G_F)\subseteq
\left\{\left(\begin{matrix} 
s&t \\ 0&v
\end{matrix}\right)\right\}\subseteq\GL_2(\F_p)$.
%
%We also find that $\G_F$ acts on $V$ as
%``multiplication by $x$";
Let
\begin{equation}
\label{lambda}
\lambda:\G_F\longrightarrow\F_p^{\times} %\nonumber
%\leqno(2.2)
\end{equation}
denote the character induced from $\rhob_{A,p}$ by ``$s$", so that
$\rhob_{A,p}(\sigma)=
\left(\begin{matrix} 
\lambda(\sigma)&* \\ 0&*
\end{matrix}\right)$
for $\sigma\in\G_F$.
Note that
$\G_F$ acts on $V$ by $\lambda$
(i.e. $\rhob(\sigma)(v)=\lambda(\sigma)v$
for $\sigma\in\G_F$, $v\in V$).

\section{Automorphism groups}
\label{Aut}

We consider the automorphism group of a QM-abelian surface.
Let $(A,i)$ be a QM-abelian surface by $\cO$ over a field $F$.
Put
$$\End_{\cO}(A):=\{f\in\End(A)\mid fi(g)=i(g)f \text{\ \ for any $g\in\cO$}\}$$
and
$$\Aut_{\cO}(A):=\Aut(A)\cap\End_{\cO}(A).$$
If $\ch F=0$,
then $\Aut_{\cO}(A)\cong\Z/2\Z$, $\Z/4\Z$ or $\Z/6\Z$.
%The isomorphism $\Aut_{\cO}(A)\cong\Z/4\Z$ (\resp $\Aut_{\cO}(A)\cong\Z/6\Z$)
%is possible only when all the prime divisors of $d$ are congruent to $2$ or $-1$ modulo $4$
%(\resp $0$ or $-1$ modulo $3$).

Let $p$ be a prime number not dividing $d$.
Let $(A,i,V)$ be a triple where $(A,i)$ is a QM-abelian surface by $\cO$ over
a field $F$
and $V$ is a left $\cO$-submodule of $A[p](\Fbar)$ with $\F_p$-dimension $2$.
Define a subgroup $\Aut_{\cO}(A,V)$ of $\Aut_{\cO}(A)$ by
$$\Aut_{\cO}(A,V):=\{f\in\Aut_{\cO}(A)\mid f(V)=V\}.$$
Assume $\ch F=0$.
Then $\Aut_{\cO}(A,V)\cong\Z/2\Z$, $\Z/4\Z$ or $\Z/6\Z$.
Notice that we have
$\Aut_{\cO}(A)\cong\Z/2\Z$
(\resp $\Aut_{\cO}(A,V)\cong\Z/2\Z$)
if and only if
$\Aut_{\cO}(A)=\{\pm 1\}$
(\resp $\Aut_{\cO}(A,V)=\{\pm 1\}$).

\section{Fields of definition}
\label{fieldofdefinition}

Let $k$ be a number field. %(considered as a subfield of $\C$).
Let $p$ be a prime number not dividing $d$.
Take a point
$$x\in M_0^B(p)(k).$$
Let $x'\in M^B(k)$ be the image
of $x$ by the map $\pi^B(p):M_0^B(p)\longrightarrow M^B$.
Then $x'$ is represented by a QM-abelian surface (say $(A_x,i_x)$) over $\kb$,
and $x$ is represented by a triple $(A_x,i_x,V_x)$
where $V_x$ is a left $\cO$-submodule of $A[p](\kb)$
with $\F_p$-dimension $2$.
For a finite extension $M$ of $k$ (in $\kb$),
we say that we can take $(A_x,i_x)$ (\resp $(A_x,i_x,V_x)$) to be defined
over $M$
if there is a QM-abelian surface $(A,i)$ over $M$ such that
$(A,i)\otimes_M\kb$ is isomorphic to $(A_x,i_x)$
(\resp if there is a QM-abelian surface $(A,i)$ over $M$
and a left $\cO$-submodule $V$ of $A[p](\kb)$ with $\F_p$-dimension $2$
stable under the action of $\G_M$ such that there is an isomorphism
between $(A,i)\otimes_M\kb$ and $(A_x,i_x)$ under which $V$ corresponds
to $V_x$).
Put
$$\Aut(x):=\Aut_{\cO}(A_x,V_x),\ \ \ \ \Aut(x'):=\Aut_{\cO}(A_x).$$
Then $\Aut(x)$ is a subgroup of $\Aut(x')$.
%
%The point $x$ is called a CM point if $A_x$ has complex multiplication.
%Note that if $\Aut(x')\cong\Z/4\Z$ or $\Z/6\Z$,
%then $x$ is a CM point.
%
%In particular, if $x$ is an elliptic point of order $2$ or $3$,
%then $x$ is a CM point.
%
Note that $x$ is an elliptic point of order $2$ (\resp $3$)
if and only if
$\Aut(x)\cong\Z/4\Z$
(\resp $\Aut(x)\cong\Z/6\Z$).

Since $x$ is a $k$-rational point, we have 
$^{\sigma}x=x$ for any $\sigma\in\G_k$.
Then, for any $\sigma\in\G_k$, there is an isomorphism
$$\phi_{\sigma}:{}^{\sigma}(A_x,i_x,V_x)\longrightarrow (A_x,i_x,V_x),$$
which we fix once for all.
Let
$$\phi'_{\sigma}:{}^{\sigma}(A_x,i_x)\longrightarrow (A_x,i_x)$$
be the isomorphism induced from $\phi_{\sigma}$ by forgetting $V_x$.
For $\sigma,\tau\in\G_k$, put
$$c_x(\sigma,\tau)
:=\phi_{\sigma}\circ {}^{\sigma}\phi_{\tau}
\circ\phi_{\sigma\tau}^{-1}\in\Aut(x)$$
and
$$c'_x(\sigma,\tau)
:=\phi'_{\sigma}\circ {}^{\sigma}\phi'_{\tau}
\circ(\phi'_{\sigma\tau})^{-1}\in\Aut(x').$$
Then $c_x$ (\resp $c'_x$) is a $2$-cocycle
and defines a cohomology class
$[c_x]\in H^2(\G_k,\Aut(x))$
(\resp $[c'_x]\in H^2(\G_k,\Aut(x'))$.
Here the action of $\G_k$ on $\Aut(x)$ (\resp $\Aut(x')$)
is defined in a natural manner (\cf \cite[Section 4]{AM}).

\begin{prop}[{\cite[Theorem (1.1), p.93]{J}}]
\label{fieldMB}

We can take $(A_x,i_x)$ to be defined over $k$
if and only if
$B\otimes_{\Q}k\cong\M_2(k)$.

\end{prop}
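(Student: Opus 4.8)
This is a field-of-moduli-versus-field-of-definition statement, and the plan is to read off descent from the vanishing of the obstruction class already assembled in Section~\ref{fieldofdefinition}. The $k$-rationality of $x'$ provides, for each $\sigma\in\G_k$, an isomorphism $\phi'_\sigma\colon{}^\sigma(A_x,i_x)\to(A_x,i_x)$, and the failure of $\{\phi'_\sigma\}$ to be genuine descent data is recorded by the $2$-cocycle $c'_x$. Replacing each $\phi'_\sigma$ by $a_\sigma\phi'_\sigma$ with $a_\sigma\in\Aut(x')=\Aut_{\cO}(A_x)$ alters $c'_x$ by a coboundary, so $\{\phi'_\sigma\}$ can be rigidified into a cocycle---equivalently, into effective descent data, effectivity holding because a polarized abelian variety descends and a QM-abelian surface is canonically polarized---exactly when $[c'_x]=0$ in $H^2(\G_k,\Aut(x'))$. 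Moreover the $\phi'_\sigma$ are $\cO$-equivariant, so the descended object is again a QM-abelian surface. Thus the proposition reduces to showing that $[c'_x]=0$ if and only if $B\otimes_\Q k\cong\M_2(k)$.

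One direction is immediate and bypasses the cocycle. If $(A,i)$ is a QM-abelian surface over $k$ representing $x'$, then $\cO$ acts $k$-linearly on the $2$-dimensional $k$-vector space $\mathrm{Lie}(A/k)$, giving a unital $k$-algebra homomorphism $B\otimes_\Q k\to\End_k(\mathrm{Lie}(A/k))\cong\M_2(k)$. As $B\otimes_\Q k$ is central simple over $k$ it is simple, so this map is injective; comparing $k$-dimensions (each equal to $4$) forces $B\otimes_\Q k\cong\M_2(k)$.

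For the converse I would identify $[c'_x]$ with the Brauer class of $B\otimes_\Q k$, so that the splitting hypothesis yields $[c'_x]=0$ and hence descent. In the generic case $\Aut_{\cO}(A_x)=\{\pm1\}$, the cocycle $c'_x$ already takes values in $\{\pm1\}$, and under $H^2(\G_k,\mu_2)\cong\Br(k)[2]$ the class $[c'_x]$ is that of a quaternion algebra $D_x$ over $k$; the task is to prove $D_x$ and $B\otimes_\Q k$ have the same class. The plan is to exploit the extension $1\to\Aut_{\cO}(A_x)\to\mathcal G\to\G_k\to1$ whose class is $[c'_x]$, acting $\sigma$-semilinearly and $B$-linearly on the Tate realization $V_\ell(A_x)$, which is free of rank one over $B\otimes_\Q\Q_\ell$; since the $B$-linear endomorphisms form $(B\otimes_\Q\Q_\ell)^{\mathrm{op}}$ by Skolem--Noether, tracking how the $\phi'_\sigma$ twist this structure reproduces the local invariants of $B\otimes_\Q k$ place by place. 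At the CM points, where $\Aut_{\cO}(A_x)$ is $\Z/4\Z$ or $\Z/6\Z$, I would restrict the obstruction to the unique subgroup $\{\pm1\}$ and run the same computation.

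The main obstacle is exactly this final identification $[c'_x]=[B\otimes_\Q k]$: it is not formal, but demands computing the $2$-cocycle built from the $\cO$-equivariant isomorphisms $\phi'_\sigma$ and matching its class, place by place, with the ramification of $B$. I expect this to be the heart of the argument, with the descent criterion and the tangent-space computation comparatively routine.
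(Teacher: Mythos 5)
The paper offers no proof of this proposition at all: it is imported wholesale from Jordan \cite[Theorem (1.1), p.93]{J}, so your attempt has to be measured against Jordan's argument. Your forward direction is complete and is exactly the standard (and Jordan's) one: if $(A,i)$ representing $x'$ is defined over $k$, then $B\otimes_{\Q}k$ acts faithfully on the two-dimensional $k$-vector space $\mathrm{Lie}(A)$, and simplicity of the central simple algebra $B\otimes_{\Q}k$ together with a dimension count forces $B\otimes_{\Q}k\cong\M_2(k)$. The descent formalism you set up (adjusting the $\phi'_{\sigma}$ by elements of $\Aut(x')$, effectivity via a compatible polarization, $\cO$-equivariance of the descended structure) is likewise sound in the case $\Aut(x')=\{\pm 1\}$.

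The converse, however, is a program rather than a proof, and the step you defer is the entire content of the theorem. You reduce everything to the identification $[c'_x]=[B\otimes_{\Q}k]$ in $\Br(k)[2]$ and then explicitly leave it open; that identification is precisely what Jordan establishes (via Shimura's theory of canonical models and the action of the descent isomorphisms on differentials), and nothing in your sketch supplies it. Two specific points would fail as written. First, a place-by-place computation on $V_{\ell}(A_x)$ can at best control invariants at finite places; Tate modules see nothing at archimedean places, and what actually disposes of them is Theorem \ref{M^B(R)}: $M^B(\R)=\emptyset$ forces $k$ to be totally imaginary as soon as $M^B(k)\ne\emptyset$, while $B$ being indefinite makes $B\otimes_{\Q}k$ split at archimedean places anyway --- an input your proposal never invokes. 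Second, your treatment of the CM points is not meaningful: when $\Aut(x')\cong\Z/4\Z$ or $\Z/6\Z$, the class $[c'_x]$ lives in $H^2(\G_k,\Aut(x'))$ (with a possibly nontrivial Galois action on the coefficients), and ``restricting the obstruction to the subgroup $\{\pm 1\}$'' is not an available operation on cohomology classes: the inclusion $\{\pm 1\}\inj\Aut(x')$ induces a map \emph{into} $H^2(\G_k,\Aut(x'))$, not out of it, and for $\Z/4\Z$ the extension $1\to\{\pm 1\}\to\Z/4\Z\to\Z/2\Z\to 1$ does not split, so there is no projection either. (That this case is genuinely delicate is visible in Proposition \ref{fieldM0Bp}, where $\Aut(x')\cong\Z/4\Z$ has to be excluded or treated separately.) In sum: one implication is proved, the descent framework is correctly assembled, but the heart of the other implication --- matching $[c'_x]$ with the class of $B\otimes_{\Q}k$ --- is named, not bridged.
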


\begin{prop}[{\cite[Proposition 4.2]{AM}}]
\label{fieldM0Bp}

(1)
Suppose $B\otimes_{\Q}k\cong\M_2(k)$.
Further assume $\Aut(x)\ne\{\pm 1\}$ or
$\Aut(x')\not\cong\Z/4\Z$.
Then we can take $(A_x,i_x,V_x)$ to be defined over $k$.

(2)
Assume $\Aut(x)=\{\pm 1\}$.
Then there is a quadratic extension $K$ of $k$
such that we can take $(A_x,i_x,V_x)$ to be defined over $K$.

\end{prop}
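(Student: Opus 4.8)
The plan is to use the standard obstruction theory for fields of moduli versus fields of definition, in exactly the cohomological form set up above: the triple $(A_x,i_x,V_x)$ (\resp the pair $(A_x,i_x)$) descends to $k$ precisely when the Weil descent class $[c_x]\in H^2(\G_k,\Aut(x))$ (\resp $[c'_x]\in H^2(\G_k,\Aut(x'))$) vanishes, the resulting descent datum being effective because everything in sight is quasi-projective and carries a canonical polarization. The one structural input I would isolate first is the compatibility of the two obstructions: since $\phi'_\sigma$ is obtained from $\phi_\sigma$ by forgetting $V_x$, the cocycle $c'_x$ is literally the image of $c_x$ under the inclusion $\iota:\Aut(x)=\Aut_\cO(A_x,V_x)\hookrightarrow\Aut_\cO(A_x)=\Aut(x')$, so that $\iota_*[c_x]=[c'_x]$. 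I would also record that the $\G_k$-action on $\{\pm1\}$ is trivial (as $\Aut(\Z/2\Z)=1$) and that the maps used below are $\G_k$-equivariant, since the Galois action is through group automorphisms and hence preserves the $2$- and $3$-primary parts of these cyclic groups.

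For part (1), the hypothesis $B\otimes_\Q k\cong\M_2(k)$ together with Proposition \ref{fieldMB} gives that $(A_x,i_x)$ is definable over $k$, i.e. $[c'_x]=0$, whence $\iota_*[c_x]=0$. Recalling $\Aut(x)\subseteq\Aut(x')$ with both groups among $\Z/2\Z,\Z/4\Z,\Z/6\Z$, I would run the short case analysis on $(\Aut(x),\Aut(x'))$. If $\Aut(x)=\Aut(x')$ then $\iota$ is an isomorphism and $[c_x]=0$ at once. The only proper inclusions possible are $\Z/2\Z\subset\Z/6\Z$ and $\Z/2\Z\subset\Z/4\Z$ (note $\Z/4\Z$ and $\Z/6\Z$ embed in no larger group from the list). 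In the case $(\Z/2\Z,\Z/6\Z)$ there is a $\G_k$-equivariant retraction $r:\Z/6\Z\twoheadrightarrow\Z/2\Z$ onto the $2$-primary part with $r\circ\iota=\id$, whence $[c_x]=r_*\iota_*[c_x]=r_*[c'_x]=0$. The remaining case $(\Z/2\Z,\Z/4\Z)$ is exactly the one excluded by the hypothesis ``$\Aut(x)\ne\{\pm1\}$ or $\Aut(x')\not\cong\Z/4\Z$''; conceptually it must be excluded because $\Z/2\Z\hookrightarrow\Z/4\Z$ admits no retraction, so $[c_x]$ can be a nonzero element of $\ker\iota_*$ even when $[c'_x]=0$. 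This proves (1).

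For part (2) I assume only $\Aut(x)=\{\pm1\}\cong\Z/2\Z$, so the obstruction lies in $H^2(\G_k,\{\pm1\})$ with trivial coefficients. Identifying $\{\pm1\}$ with $\mu_2$ and using the Kummer sequence together with Hilbert 90, I get $H^2(\G_k,\{\pm1\})\cong\Br(k)[2]$. Since $k$ is a number field, every class in $\Br(k)[2]$ has period $2$, hence index $2$ (period equals index over global fields), so it is represented by a quaternion algebra and is therefore split by some quadratic extension $K/k$; equivalently, writing $S$ for the finite set of places at which $[c_x]$ has local invariant $1/2$, I would choose by weak approximation an $a\in k^\times$ that is a nonsquare in $k_v$ for every $v\in S$ and set $K=k(\sqrt a)$, so that every $v\in S$ is non-split in $K$ and $\Res_{K/k}[c_x]=0$. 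Because $x\in M_0^B(p)(k)\subseteq M_0^B(p)(K)$, the obstruction to defining the triple over $K$ is precisely $\Res_{K/k}[c_x]$, which now vanishes; hence $(A_x,i_x,V_x)$ descends to the quadratic extension $K$, proving (2).

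The main obstacle I expect is not a single computation but the careful bookkeeping of equivariance and compatibility: verifying that $c'_x$ really is $\iota_*c_x$ at the cocycle level, that the retraction in the $\Z/6\Z$ case is genuinely $\G_k$-equivariant (so that $r_*$ is defined), and --- most delicately --- pinning down that the descent obstruction for the triple over the larger field $K$ is exactly $\Res_{K/k}[c_x]$ and that the resulting descent datum is effective. The algebraic heart is the elementary observation that $\Z/2\Z\hookrightarrow\Z/6\Z$ splits while $\Z/2\Z\hookrightarrow\Z/4\Z$ does not, which is precisely what forces the dichotomy between statements (1) and (2).
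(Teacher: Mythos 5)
Your proof is correct and follows essentially the same route as the paper's source argument in \cite[Proposition 4.2]{AM}: the descent obstruction $[c_x]\in H^2(\G_k,\Aut(x))$ mapping to $[c'_x]$ under $\Aut(x)\hookrightarrow\Aut(x')$, the case analysis exploiting that $\Z/2\Z\hookrightarrow\Z/6\Z$ admits an equivariant retraction while $\Z/2\Z\hookrightarrow\Z/4\Z$ does not (the excluded case), and for (2) the identification $H^2(\G_k,\{\pm 1\})\cong\Br(k)[2]$ with a quadratic splitting field chosen to be nonsplit at the finitely many places where $[c_x]_v\ne 0$ --- which is precisely the criterion the paper records as Lemma \ref{fieldofdef}.
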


\begin{lem}[{\cite[Lemma 4.3]{AM}}]
\label{fieldofdef}

Let $K$ be a quadratic extension of $k$.
Assume $\Aut(x)=\{\pm 1\}$.
Then the following two conditions are equivalent.

(1)
We can take $(A_x,i_x,V_x)$ to be defined over $K$.

(2)
For any place $v$ of $k$ satisfying $[c_x]_v\ne 0$,
the tensor product $K\otimes_k k_v$ is a field.
%(i.e. $v$ does not split in $K/k$).

\end{lem}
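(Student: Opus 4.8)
The plan is to translate the field-of-definition question into the vanishing of a restricted Galois cohomology class, and then to read off that vanishing place-by-place via class field theory.

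First I would record the descent principle underlying the very construction of $c_x$: for a field $L$ with $k\subseteq L\subseteq\kb$, the triple $(A_x,i_x,V_x)$ can be taken to be defined over $L$ if and only if the restricted class $\Res^{\G_L}_{\G_k}[c_x]$ vanishes in $H^2(\G_L,\Aut(x))$. Indeed, descending $(A_x,i_x,V_x)$ to $L$ amounts to modifying the isomorphisms $\phi_\sigma$ (for $\sigma\in\G_L$) so that they satisfy the cocycle relation $\phi_\sigma\circ{}^{\sigma}\phi_\tau=\phi_{\sigma\tau}$, and the obstruction to such a choice is exactly the class of the $2$-cocycle $c_x$ restricted to $\G_L$. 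Under the hypothesis $\Aut(x)=\{\pm 1\}$ the group $\Aut(x)$ is abelian with trivial $\G_k$-action, so $[c_x]$ is an honest class in $H^2(\G_k,\{\pm 1\})$, and condition (1) with $L=K$ becomes $\Res^{\G_K}_{\G_k}[c_x]=0$ in $H^2(\G_K,\{\pm 1\})$.

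Next I would identify $H^2(\G_k,\{\pm 1\})\cong\Br(k)[2]$ (Kummer theory together with Hilbert 90) and invoke the fundamental exact sequence of class field theory $0\to\Br(k)\to\bigoplus_v\Br(k_v)\xrightarrow{\sum_v\mathrm{inv}_v}\Q/\Z\to 0$. A class in $\Br(k)[2]$ is determined by its local invariants $\mathrm{inv}_v\in\{0,1/2\}\subset\Q/\Z$, and the condition $[c_x]_v\ne 0$ is precisely $\mathrm{inv}_v([c_x])=1/2$, which occurs at only finitely many $v$. Restriction to $K$ is governed place-by-place by the formula $\mathrm{inv}_w(\Res^{\G_K}_{\G_k}\alpha)=[K_w:k_v]\cdot\mathrm{inv}_v(\alpha)$ for $w\mid v$. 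I would then split into cases according to how $v$ behaves in $K$: if $v$ is non-split (inert or ramified), so that $K\otimes_k k_v$ is a field, the unique $w\mid v$ has $[K_w:k_v]=2$ and the local invariant is doubled to $0$; if $v$ splits, so that $K\otimes_k k_v\cong k_v\times k_v$, each of the two places $w\mid v$ has $[K_w:k_v]=1$ and the invariant is unchanged. Hence $\Res^{\G_K}_{\G_k}[c_x]$ has trivial invariant at every place of $K$ — equivalently, vanishes in $\Br(K)[2]$ — if and only if every $v$ with $\mathrm{inv}_v([c_x])=1/2$ fails to split in $K$, i.e.\ $K\otimes_k k_v$ is a field. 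Combined with the first step this is exactly the asserted equivalence of (1) and (2).

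The step I expect to be the main obstacle is the first one: making the descent principle precise in the present setting, so that descending the abelian surface $A_x$ together with its $\cO$-action $i_x$ and its Galois-stable submodule $V_x$ is controlled by exactly the group $\Aut(x)=\Aut_{\cO}(A_x,V_x)$ and by the cocycle $c_x$ as defined, and that passing to $L$ corresponds literally to restricting $[c_x]$ along $\G_L\hookrightarrow\G_k$. Once this dictionary between fields of definition and restrictions of $[c_x]$ is established, the remainder is a formal computation with local invariants in the Brauer group.
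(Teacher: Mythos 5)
Your proposal is correct and follows essentially the same route as the proof this paper relies on (the lemma is quoted from \cite{AM}, where it is proved exactly this way): since $\Aut(x)=\{\pm 1\}$ with trivial Galois action, the obstruction class $[c_x]\in H^2(\G_k,\{\pm 1\})\cong\Br(k)[2]$ controls descent of the triple, and its restriction to $\G_K$ is tested place-by-place via the invariant formula $\mathrm{inv}_w(\Res\,\alpha)=[K_w:k_v]\cdot\mathrm{inv}_v(\alpha)$ together with the injectivity of $\Br(K)\longrightarrow\bigoplus_w\Br(K_w)$, so that $\Res^{\G_K}_{\G_k}[c_x]=0$ if and only if no place $v$ with $[c_x]_v\ne 0$ splits in $K$. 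The step you flag as the main obstacle — the dictionary between fields of definition and vanishing of the restricted class — is handled there just as you sketch, by modifying the fixed isomorphisms $\phi_\sigma$ by a $1$-cochain with values in $\{\pm 1\}$ to obtain a descent datum and applying Weil descent (effective here since $A_x$ is projective), so there is no gap.
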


\section{Classification of characters (I)}
\label{charI}

We keep the notation in Section \ref{fieldofdefinition}.
Throughout this section,
assume $\Aut(x)=\{\pm 1\}$.
Let $K$ be a quadratic extension of $k$
which satisfies the equivalent conditions in
Lemma \ref{fieldofdef}.
%If $B\otimes_{\Q}k\cong\M_2(k)$, we impose no restrictions
%on the quadratic extension $K$.
%Note that the quadratic extension $K$ has no restriction
%if $B\otimes_{\Q}k\cong\M_2(k)$.
Then $x$ is represented by a triple $(A,i,V)$,
where $(A,i)$ is a QM-abelian surface over $K$ and $V$ is a left
$\cO$-submodule of $A[p](\Kb)$ with $\F_p$-dimension $2$
stable under the action of $\G_K$.
Let
$$\lambda:\G_K\longrightarrow\F_p^{\times}$$
be the character associated to $V$ in (\ref{lambda}).
For a prime $\mfl$ of $k$ (\resp $K$), let $I_{\mfl}$ denote the inertia
subgroup of $\G_k$ (\resp $\G_K$) at $\mfl$.

Let $\lambdaab:\Gab_K\longrightarrow\F_p^{\times}$
be the natural map induced from $\lambda$.
Put
\begin{equation}
\label{phi}
\varphi:=\lambdaab\circ\tr_{K/k}:\G_k\longrightarrow\Gab_K
\longrightarrow\F_p^{\times} %\nonumber
%\leqno(5.1)
\end{equation}
where $\tr_{K/k}:\G_k\longrightarrow\Gab_K$ is the transfer map.
Notice that the induced map
$\trab_{K/k}:\Gab_k\longrightarrow\Gab_K$
from $\tr_{K/k}$ corresponds to the natural inclusion
$\kA^{\times}\hookrightarrow\KA^{\times}$
via class field theory
(\cite[Theorem 8 in \S 9 of Chapter XIII, p.276]{We}).
We know that $\varphi^{12}$ is unramified at every prime of $k$
not dividing $p$ (\cite[Corollary 5.2]{AM}), and so
$\varphi^{12}$ corresponds to a character of the
ideal group $\mfI_k(p)$ consisting of
fractional ideals of $k$ prime to $p$.
By abuse of notation, let denote also by $\varphi^{12}$
the corresponding character
of $\mfI_k(p)$.

Let $\cM$ be the set of prime numbers $q$ such that $q$ splits
completely in $k$
and $q$ does not divide $6h_k$.
Let $\cN$ be the set of primes $\mfq$ of $k$ such that
$\mfq$ divides some prime number $q\in \cM$.
%Let $\mfI_k$ be the ideal group of $k$, and let $\mfP_k$
%be the subgroup of $\mfI_k$ consisting of principal ideals.
Take a finite subset $\emptyset\ne \cS\subseteq \cN$
which generates the ideal class group of $k$. %$Cl_k=\mfI_k/\mfP_k$.
For each prime $\mfq\in \cS$, fix an element $\alpha_{\mfq}\in\cO_k\setminus\{0\}$
satisfying $\mfq^{h_k}=\alpha_{\mfq}\cO_k$.

For a prime number $q$, put
$$\cFR(q):=\Set{\beta\in\C|
\beta^2+a\beta+q=0 \text{ for some integer $a\in\Z$ with $|a|\leq 2\sqrt{q}$}}.$$
Notice that $|a|\leq 2\sqrt{q}$ implies $|a|<2\sqrt{q}$
since $2\sqrt{q}$ is not a rational number.
For $\mfq\in\cS$,
put $\N(\mfq)=\sharp(\cO_k/\mfq)$.
Then $\N(\mfq)$ is a prime number.
Define the sets

\noindent
$\cM_1(k):=$
$$\Set{(\mfq,\varepsilon_0,\beta_{\mfq})|
\mfq\in \cS,\ \varepsilon_0=\sum_{\sigma\in\Gal(k/\Q)}a_{\sigma}\sigma
\text{ with $a_{\sigma}\in\{0,8,12,16,24 \}$},\ 
\beta_{\mfq}\in\cFR(\N(\mfq))},$$

\noindent
$\cM_2(k):=\Set{\Norm_{k(\beta_{\mfq})/\Q}(\alpha_{\mfq}^{\varepsilon_0}-\beta_{\mfq}^{24h_k})\in\Z|
(\mfq,\varepsilon_0,\beta_{\mfq})\in\cM_1(k)}\setminus\{0\}$,

\noindent
$\cN_0(k):=\Set{\text{$l$ : prime number}|\text{$l$ divides some integer $m\in\cM_2(k)$}}$,

\noindent
$\cT(k):=\Set{\text{$l'$ : prime number}|\text{$l'$ is divisible
by some prime $\mfq'\in \cS$}}
\cup\{2,3\}$,

\noindent
$\cN_1(k):=\cN_0(k)\cup\cT(k)\cup\Ram(k)$.

\noindent
Notice that all of $\cFR(q), \cM_1(k)$, $\cM_2(k)$, $\cN_0(k)$, $\cT(k)$,
$\cN_1(k)$ are finite.

\begin{thm}[{\cite[Theorem 5.6]{AM}}]
\label{type23phi}

Assume that $k$ is Galois over $\Q$.
If $p\not\in\cN_1(k)$
(and if $p$ does not divide $d$),
then the character
$\varphi:\G_k\longrightarrow \F_p^{\times}$
%associated to a non-CM point of $M_0^B(p)(k)$ 
is of one of the following types.

Type 2:
$\varphi^{12}=\theta_p^{12}$ and $p\equiv 3\bmod{4}$.

Type 3:
There is an imaginary quadratic field $L$ satisfying the following
two conditions.

%\noindent
%(a)
%The prime number $p$ splits in $L$.

\noindent
(a)
The Hilbert class field $H_L$ of $L$ is contained in $k$.

\noindent
(b)
There is a prime $\mfp_L$ of $L$ lying over $p$
such that
$\varphi^{12}(\mfa)\equiv\delta^{24}\bmod{\mfp_L}$ holds
for any fractional ideal $\mfa$ of $k$ prime to $p$.
Here $\delta$ is any element of $L$ such that
$\Norm_{k/L}(\mfa)=\delta\cO_L$.

\end{thm}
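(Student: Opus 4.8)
The plan is to follow the Serre--Mazur strategy for isogeny characters, transported to the quaternionic setting and phrased through the ideal-theoretic avatar of $\varphi^{12}$. The entire argument turns on a single congruence obtained by evaluating $\varphi^{12}$ on the principal ideals $\mfq^{h_k}=\alpha_{\mfq}\cO_k$ (for $\mfq\in\cS$) in two independent ways: once by the local behaviour at $p$, and once through the Frobenius/Weil description at the completely split auxiliary prime below $\mfq$. Matching the two evaluations will force an \emph{algebraic} identity, and the split into Type 2 and Type 3 will then be read off from the arithmetic of that identity. Throughout, the inclusion $\Ram(k)\subseteq\cN_1(k)$ guarantees that $p$ is unramified in $k$, which is exactly what keeps the local analysis at $p$ clean.

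First I would carry out the local analysis at $p$. A QM-abelian surface has potential good reduction everywhere (the quaternionic multiplication precludes a nontrivial toric part), and $V$ is cut out by a finite flat group scheme, so the restriction of $\lambda$ to an inertia subgroup above $p$ is, after the twelfth power that clears the automorphism ambiguity (the orders $2,4,6$ of $\Aut_{\cO}$ all divide $12$), governed by a Hodge--Tate--type weight. Propagating this through the transfer $\tr_{K/k}$ and through the $\Gal(k/\Q)$-action yields, on principal ideals prime to $p$,
\[
\varphi^{12}(\alpha_{\mfq}\cO_k)\equiv\alpha_{\mfq}^{\varepsilon_0}\pmod{\mfp}
\]
for a prime $\mfp$ of $k$ above $p$ and some $\varepsilon_0=\sum_{\sigma}a_{\sigma}\sigma$ with $a_{\sigma}\in\{0,8,12,16,24\}=24\cdot\{0,\tfrac13,\tfrac12,\tfrac23,1\}$; these are $24$ times the admissible weights of $\lambda$, the fractional values $\tfrac13,\tfrac23,\tfrac12$ coming from the order-$6$ and order-$4$ automorphisms. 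This is precisely the data packaged in $\cM_1(k)$. The Galois hypothesis on $k$ is what makes $\varepsilon_0$ a well-defined element of $\Z[\Gal(k/\Q)]$ and keeps the conjugates $\sigma(\alpha_{\mfq})$ inside $k$.

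Next I would compute the same value globally. For $\mfq\in\cS$ lying over $q\in\cM$ one has $\N(\mfq)=q$, and at a prime of $K$ above $\mfq$ the reduction of $\lambda$ at Frobenius is congruent mod $\mfp$ to a root $\beta$ of a characteristic polynomial $T^2+aT+q$ of the reduced surface, i.e.\ to some $\beta\in\cFR(q)$. Since $\tr_{K/k}$ corresponds to the inclusion $\kA^{\times}\inj\KA^{\times}$, the ideal $\mfq$ contributes the product of its two local factors, so that $\varphi^{12}(\mfq)\equiv\beta^{24}$ and hence $\varphi^{12}(\mfq^{h_k})\equiv\beta^{24h_k}\pmod{\mfp}$. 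Comparing with the local computation gives $\alpha_{\mfq}^{\varepsilon_0}\equiv\beta^{24h_k}\pmod{\mfp}$, whence $p\mid\Norm_{k(\beta)/\Q}(\alpha_{\mfq}^{\varepsilon_0}-\beta^{24h_k})$. Because $p\notin\cN_1(k)\supseteq\cN_0(k)$, this norm cannot be a nonzero integer divisible by $p$ (that would place $p$ in $\cN_0(k)$), so it must vanish: the congruence upgrades to the genuine algebraic identity $\alpha_{\mfq}^{\varepsilon_0}=\beta^{24h_k}$.

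Finally I would extract the classification from this identity. When $a=0$ we have $\beta^{2}=-q$, so $\beta^{24h_k}=q^{12h_k}$ and the character collapses to a power of the cyclotomic character, giving $\varphi^{12}=\theta_p^{12}$ together with the congruence $p\equiv 3\bmod 4$ forced by this quadratic (supersingular-type) case --- this is Type 2. Otherwise $L:=\Q(\beta)$ is imaginary quadratic, and the identity $\alpha_{\mfq}^{\varepsilon_0}=\beta^{24h_k}$ exhibits $\varphi^{12}$ as the reduction of a CM character of $L$; the existence of a single $\delta$ with $\Norm_{k/L}(\mfa)=\delta\cO_L$ for every $\mfa$ prime to $p$ (so that substituting yields $\varphi^{12}(\mfa)\equiv\delta^{24}\bmod\mfp_L$) is exactly the requirement that $\Norm_{k/L}$ lands in principal ideals, i.e.\ that $H_L\subseteq k$ --- this is Type 3. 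I expect the main obstacle to be the local step at $p$: pinning down the admissible exponent set $\{0,8,12,16,24\}$ demands a finite-flat/$p$-adic Hodge-theoretic analysis that must simultaneously accommodate the ramification in $K/k$ and in $k/\Q$ and the twelfth-power automorphism twist. The subsequent passage from the numerical identity to the clean CM statement --- in particular the deduction $H_L\subseteq k$ and the well-definedness of $\delta$ --- is the other delicate point.
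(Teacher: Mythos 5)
First, a structural caveat: this paper does not prove Theorem \ref{type23phi} at all --- it is imported verbatim from \cite[Theorem 5.6]{AM} --- so the only internal evidence to measure your proposal against is the machinery of Section \ref{charI}. Measured that way, your skeleton is the right one: the sets $\cFR(q)$, $\cM_1(k)$, $\cM_2(k)$, $\cN_0(k)$ are manifestly built for exactly the two-sided evaluation you describe, namely computing $\varphi^{12}(\mfq^{h_k})=\varphi^{12}(\alpha_{\mfq}\cO_k)$ once through inertia at $p$ (giving $\alpha_{\mfq}^{\varepsilon_0}$ with $a_{\sigma}\in\{0,8,12,16,24\}$) and once through Frobenius--Weil numbers $\beta_{\mfq}\in\cFR(\N(\mfq))$ (giving $\beta_{\mfq}^{24h_k}$), then using $p\not\in\cN_0(k)$ to promote the congruence to the exact identity $\alpha_{\mfq}^{\varepsilon_0}=\beta_{\mfq}^{24h_k}$. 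This is indeed the Momose-style argument that \cite{AM} runs. (One small point you gloss over: for the Frobenius evaluation $\varphi^{12}(\mfq)\equiv\beta_{\mfq}^{24}$ to be single-valued one needs control of the splitting of $\mfq$ in $K/k$, which is why the paper invokes the freedom to replace $K$, \cf the appeal to \cite[Remark 4.4]{AM} in Section \ref{charI}.)

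The genuine gaps are in the endgame, and one step is wrong as stated. Your dichotomy ``$a=0$ gives Type 2, otherwise $L:=\Q(\beta)$ is imaginary quadratic gives Type 3'' is not a dichotomy: since $|a|<2\sqrt{q}$ strictly (as the paper notes, $2\sqrt{q}\notin\Q$), \emph{every} $\beta\in\cFR(q)$ generates an imaginary quadratic field, including the $a=0$ case where $\Q(\beta)=\Q(\sqrt{-q})$. The correct split is on whether $\beta_{\mfq}^{24h_k}$ is rational, equivalently whether $\varepsilon_0$ is the diagonal $12\sum_{\sigma}\sigma$; in the rational case $\beta^2/q$ must be a root of unity, and ruling out degenerate possibilities is precisely where the hypotheses $q\nmid 6h_k$ (built into $\cM$) and $2,3\in\cT(k)$ enter --- your proposal never uses them. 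Even granting that case, $\varphi^{12}=\theta_p^{12}$ does not follow from the identity at one prime: you need it at all $\mfq\in\cS$, plus unramifiedness, plus the fact that $\cS$ generates the class group, to show the everywhere-unramified character $\varphi^{12}\theta_p^{-12}$ is trivial. The congruence $p\equiv 3\bmod 4$ is likewise not ``forced by the supersingular-type case'' as you assert; it requires a separate parity argument on inertia weights (an odd-coefficient relation modulo $p-1$ bounding the $2$-part of $p-1$), which you do not supply, and which is what later legitimizes the twist $\lambda\theta_p^{-\frac{p+1}{4}}$ in Section \ref{charI}. Finally, the two steps you explicitly defer --- the finite-flat computation pinning the inertia exponents to $\{0,8,12,16,24\}$, and the deduction of $H_L\subseteq k$ from triviality of $\Norm_{k/L}$ on a generating set of the class group (via the Artin-theoretic identification of the image of $\Norm_{k/L}:Cl_k\to Cl_L$ with $\Gal(H_L/H_L\cap k)$) --- are the actual mathematical content of \cite[\S 5]{AM}; with both deferred and the type division misdrawn, what you have is a correct road map rather than a proof.
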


From now to the end of this section, assume that $k$ is Galois over $\Q$.

\begin{lem}[{\cite[Lemma 5.11]{AM}}] %[\cf{\cite[Lemma 5.11]{AM}}, corrected as follows]
\label{type2lambda}

Suppose $p\geq 11$, $p\ne 13$ and $p\not\in\cN_1(k)$.
Further assume the following two conditions.

(a)
Every prime $\mfp$ of $k$ above $p$ is inert in $K/k$.

(b)
Every prime $\mfq\in\cS$ is ramified in $K/k$.

\noindent
If $\varphi$ is of type 2, then we have the following.

(i) The character $\lambda^{12}\theta_p^{-6}:\G_K\longrightarrow\F_p^{\times}$
is unramified everywhere.

(ii) The map $Cl_K\longrightarrow\F_p^{\times}$ induced from $\lambda^{12}\theta_p^{-6}$
is trivial on
$C_{K/k}:=\im(Cl_k\longrightarrow Cl_K)$,
where $Cl_K$ is the ideal class group of $K$
and
$Cl_k\longrightarrow Cl_K$
is the map defined by
$[\mfa]\longmapsto[\mfa\cO_K]$.

\end{lem}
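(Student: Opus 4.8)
The plan is to derive both statements from a single global identity furnished by the transfer, together with a local analysis at the primes above $p$. The standing hypotheses $p\geq 11$, $p\ne 13$, $p\notin\cN_1(k)$ (with (b)) are what place $\varphi$ under the classification of Theorem~\ref{type23phi}, so that assuming $\varphi$ is of type 2 is meaningful; in particular $p\notin\cN_1(k)\supseteq\Ram(k)$ forces $p$ to be unramified in $k$. First I would record the transfer relation: writing $c$ for the nontrivial element of $\Gal(K/k)$ and $\lambda^c:=\lambda\circ c$, the Verlagerung formula for the index-two subgroup $\G_K\subseteq\G_k$ gives $\tr_{K/k}(\sigma)\equiv\sigma\cdot{}^c\sigma$ modulo commutators for $\sigma\in\G_K$, hence $\varphi|_{\G_K}=\lambda\cdot\lambda^c$. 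Since $\theta_p$ on $\G_k$ restricts to the mod $p$ cyclotomic character of $\G_K$, type 2 ($\varphi^{12}=\theta_p^{12}$) yields the key identity
$$\lambda^{12}\cdot(\lambda^c)^{12}=\theta_p^{12}\qquad\text{on }\G_K. \quad (\star)$$

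For (i), away from $p$ I would use that $\lambda^{12}$ is unramified at every prime $\mfl\nmid p$ of $K$: since a QM-abelian surface has potential good reduction, $\rhob_{A,p}|_{I_{\mfl}}$ factors through a finite quotient of order dividing $12$, so $\lambda^{12}|_{I_{\mfl}}=1$; as $\theta_p$ is unramified outside $p$, so is $\lambda^{12}\theta_p^{-6}$. The heart of the argument is at a prime $\mfp\mid p$ of $k$. By hypothesis (a) $\mfp$ is inert in $K/k$, so the unique prime $\mfP$ above $\mfp$ is unramified over $\Q_p$ and $I_{\mfP}=I_{\mfp}$. Being $\F_p^{\times}$-valued, $\lambda|_{I_{\mfp}}$ is tame, hence a power $\theta_p^{a}|_{I_{\mfp}}$ of the level-one fundamental character (every $\F_p^{\times}$-valued tame character factors through the norm to $\F_p^{\times}$). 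Because $c$ restricts locally to the Frobenius of the unramified quadratic $K_{\mfP}/k_{\mfp}$, conjugation by $c$ raises tame characters to the $p^{f}$-th power ($f$ the residue degree of $\mfp$), so $\lambda^c|_{I_{\mfp}}=(\lambda|_{I_{\mfp}})^{p^{f}}$. Restricting $(\star)$ to $I_{\mfp}$ and using $p^{f}\equiv 1\pmod{p-1}$ gives $\theta_p^{24a}|_{I_{\mfp}}=\theta_p^{12}|_{I_{\mfp}}$, i.e. $12(2a-1)\equiv 0\pmod{p-1}$.

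The main obstacle is to upgrade this to $\lambda^{12}|_{I_{\mfp}}=\theta_p^{6}|_{I_{\mfp}}$, i.e. $6(2a-1)\equiv 0\pmod{p-1}$, and this is exactly where the type-2 congruence $p\equiv 3\bmod 4$ is essential: writing $p-1=2m$ with $m$ odd and noting that $2a-1$ is odd, the relation $2m\mid 12(2a-1)$ is equivalent to $m\mid 3(2a-1)$, whence $2m\mid 6(2a-1)$. Thus $\lambda^{12}\theta_p^{-6}$ is trivial on $I_{\mfp}$, and applying this at every prime above $p$ (all inert by (a)) proves (i).

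For (ii), statement (i) shows $\lambda^{12}\theta_p^{-6}$ is everywhere unramified, hence defines a character of $Cl_K$; it suffices to evaluate it on the classes $[\mfa\cO_K]$ generating $C_{K/k}$, with $\mfa$ a fractional ideal of $k$ prime to $p$. Since $\mfa\cO_K$ is $c$-stable, $\lambda^c(\mfa\cO_K)=\lambda(\mfa\cO_K)$; on the other hand $(\lambda\lambda^c)(\mfa\cO_K)=\varphi(\Norm_{K/k}(\mfa\cO_K))=\varphi(\mfa)^2$ because $\lambda\lambda^c=\varphi|_{\G_K}$ and $\Norm_{K/k}(\mfa\cO_K)=\mfa^{2}$. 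Comparing gives $\lambda(\mfa\cO_K)^{2}=\varphi(\mfa)^{2}$, so taking twelfth powers and invoking type 2, $\lambda^{12}(\mfa\cO_K)=\varphi^{12}(\mfa)=\theta_p^{12}(\mfa)$. Finally the cyclotomic character of $K$ satisfies $\theta_p(\mfa\cO_K)=\theta_p(\mfa)^{2}$ (as $\Norm_{K/\Q}(\mfa\cO_K)=\Norm_{k/\Q}(\mfa)^{2}$), so $\theta_p^{-6}(\mfa\cO_K)=\theta_p(\mfa)^{-12}$ and therefore $(\lambda^{12}\theta_p^{-6})(\mfa\cO_K)=1$. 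As these classes generate $C_{K/k}$, the induced map is trivial there, giving (ii).
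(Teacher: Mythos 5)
Your argument is correct in substance, and its skeleton is the one behind the result as proved in \cite{AM} (this paper only cites \cite[Lemma 5.11]{AM}, it does not reprove it): the transfer identity $\varphi|_{\G_K}=\lambda\cdot\lambda^{c}$, a tame-character computation at the primes above $p$ (which are inert over $k$ by (a) and absolutely unramified since $p\notin\cN_1(k)\supseteq\Ram(k)$, so that $\theta_p$ has order exactly $p-1$ on inertia and every $\F_p^{\times}$-valued character of the procyclic tame quotient is a power of it, fixed by Frobenius conjugation), the parity step exploiting $p\equiv 3\bmod 4$ to pass from $12(2a-1)\equiv 0$ to $6(2a-1)\equiv 0 \bmod{p-1}$, and, for (ii), the compatibility of $\trab_{K/k}$ with the inclusion $\kA^{\times}\inj\KA^{\times}$ that the paper quotes from Weil. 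Your congruence derivation at $p$ is clean and self-contained, and it is consistent with the exponent $a=\frac{p+1}{4}$ implicit in the paper's later substitution $\psi=\lambda\theta_p^{-\frac{p+1}{4}}$. Note also that you never use hypothesis (b) or $p\geq 11$, $p\ne 13$; that is harmless here --- (b) is needed in the arguments \emph{following} the lemma (where $\mfq\cO_K=\mfq_K^2$ is used), not in the lemma itself.

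Two points in (ii) need tightening. First, the intermediate quantities $\lambda(\mfa\cO_K)$, $\lambda^{c}(\mfa\cO_K)$ and $\varphi(\mfa)$ are not well defined: $\lambda$ and $\varphi$ are ramified characters (only their twelfth powers are unramified away from $p$), so they do not give characters of ideal groups. The fix is routine: represent $\mfa$ by a finite id\`ele $x\in\kA^{\times}\subseteq\KA^{\times}$; then by the Weil compatibility $\varphi$ corresponds id\`elically to $\lambda|_{\kA^{\times}}$, one has $c(x)=x$ and $\Norm_{K/k}(x)=x^{2}$, every step of your chain becomes a well-defined identity of values at $x$, and since $\lambda^{12}\theta_p^{-6}$ is everywhere unramified by (i) its value at $x$ depends only on the class $[\mfa\cO_K]$. (Equivalently, one can shortcut the $\lambda\lambda^{c}$/norm detour: the id\`elic description gives $\varphi^{12}(\mfa)=\lambda^{12}(\mfa\cO_K)$ directly, both sides being well defined on prime-to-$p$ ideals.) Second, to conclude that $\lambda^{12}\theta_p^{-6}$ factors through $Cl_K$ rather than the narrow class group, you must also check triviality on the local factors at the real places of $K$; this is automatic, since any involution $\tau$ has $\lambda(\tau)=\pm 1$, hence $\lambda^{12}(\tau)=1$, and $\theta_p^{6}(\tau)=(-1)^{6}=1$, but it should be said.
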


\begin{lem}[{\cite[Lemma 5.12]{AM}}]
\label{q/p-1}

Suppose $p\geq 11$, $p\ne 13$ and $p\not\in\cN_1(k)$.
Assume that $\varphi$ is of type 2.
Let $q<\frac{p}{4}$ be a prime number
which splits completely in $k$.
Then we have
$\left(\frac{q}{p}\right)=-1$
and
$q^{\frac{p-1}{2}}\equiv -1\bmod{p}$.

\end{lem}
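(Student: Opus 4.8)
The plan is to establish both conclusions—the Legendre symbol value $\left(\frac{q}{p}\right)=-1$ and the congruence $q^{\frac{p-1}{2}}\equiv -1 \bmod p$—as two faces of the same fact, since by Euler's criterion these are equivalent once we work modulo $p$. So the real content is to show that $q$ is a quadratic nonresidue mod $p$. Since $\varphi$ is of type $2$, Theorem~\ref{type23phi} gives us $\varphi^{12}=\theta_p^{12}$ together with $p\equiv 3\bmod 4$, and I intend to exploit the interaction between the cyclotomic character $\theta_p$ and the ramification/splitting data at $q$.

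**First I would** unwind the definition of $\varphi$ and its relation to $\lambda$ and $\theta_p$ at the prime $q$. Because $q$ splits completely in $k$, every prime $\mfq$ of $k$ above $q$ has residue field $\F_q$, so the corresponding Frobenius elements behave predictably; the transfer map $\tr_{K/k}$ appearing in the definition $\varphi=\lambdaab\circ\tr_{K/k}$ translates, via the class-field-theory identification with the inclusion $\kA^\times\hookrightarrow\KA^\times$, into explicit norm data at $q$. The key computation is to evaluate $\varphi^{12}$ on a Frobenius at $q$ (equivalently on the ideal $\mfq$, using that $\varphi^{12}$ descends to a character of $\mfI_k(p)$) and to recognize that $\theta_p$ evaluated at a Frobenius at the rational prime $q$ returns $q\bmod p$. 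The hypothesis $q<\frac p4$ should guarantee that $q$ is genuinely a unit mod $p$ and that no degenerate collapse occurs; I would track where this inequality is used, likely to ensure that the element we produce is nonzero mod $p$ and to separate $q$ from $p$ in the relevant norm computations.

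**The crux** is then to convert the equation $\varphi^{12}=\theta_p^{12}$ into a statement that forces $q$ to be a nonresidue. Evaluating at $q$, the left side carries information coming from $\lambda$ pulled back through the transfer, and because $K/k$ is a quadratic extension, the transfer applied to a Frobenius at $\mfq$ will, depending on whether $\mfq$ splits or is inert in $K$, produce either a product of two Frobenii or a single squared contribution. This is exactly where the sign $-1$ enters: the value $\theta_p^{12}(\Frob_q)=q^{12}\bmod p$ on one side must match a term on the other side that, after taking a suitable square root compatible with $p\equiv 3\bmod4$, yields $q^{\frac{p-1}{2}}\equiv -1$. The condition $p\equiv 3\bmod 4$ is essential here, since it controls the behavior of square roots of $-1$ mod $p$ and pins down the sign rather than leaving it ambiguous.

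**The main obstacle** I anticipate is the careful bookkeeping of the transfer map at $q$ together with the twelfth powers. One must be sure that raising to the twelfth power does not lose the sign information we need—that is, extracting $q^{\frac{p-1}{2}}$ from $\varphi^{12}(\mfq)=q^{12}$ requires relating the $12$-th power relation to the full order of $q$ in $\F_p^\times$, and this is delicate precisely because $12$ is not coprime to $p-1$ in general. I would handle this by combining the identity $\varphi^{12}=\theta_p^{12}$ with the explicit shape of $\lambda$ on inertia (governed by the fact that $\lambda$ cuts out the Galois action on the $2$-dimensional submodule $V$ of $A[p]$) and with the splitting behavior of $q$; the hypothesis that $q$ splits completely in $k$ and the size bound $q<\frac p4$ together should rigidify the situation enough that the twelfth-power ambiguity collapses to the single clean congruence $q^{\frac{p-1}{2}}\equiv -1 \bmod p$, from which $\left(\frac qp\right)=-1$ follows by Euler's criterion.
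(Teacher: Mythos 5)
Your opening reduction is fine: by Euler's criterion the two conclusions are indeed one statement, and you correctly isolate $\varphi^{12}=\theta_p^{12}$ together with $p\equiv 3\bmod 4$ as the input from type 2. But the core of your plan --- extracting $q^{\frac{p-1}{2}}\equiv -1\bmod p$ from the character identity by careful bookkeeping of the transfer map and the twelfth powers --- cannot succeed, and at the decisive moment your proposal only asserts that the hypotheses ``should rigidify the situation.'' Evaluating $\varphi^{12}=\theta_p^{12}$ on (the class of) a prime $\mfq$ of $k$ above $q$ tells you exactly that $\varphi(\mfq)=\zeta q$ in $\F_p^{\times}$ with $\zeta^{\gcd(12,p-1)}=1$; since $p\equiv 3\bmod 4$ forces $\gcd(12,p-1)\in\{2,6\}$, this is a statement about sixth roots of unity and is completely symmetric between quadratic residues and nonresidues: no manipulation of $\varphi$, $\theta_p$ and the transfer alone can produce the sign $-1$. (Your appeal to ``the shape of $\lambda$ on inertia'' also misfires: at $\mfq$ the relevant datum is a Frobenius, not inertia; the inertia information at $p$ was already consumed in proving Theorem \ref{type23phi}.)

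The missing idea is arithmetic-geometric: one must reduce the QM-abelian surface at a prime above $q$ and use the Weil bound. Choose $K$ so that $\mfq$ ramifies in $K/k$ and conditions (a), (b) of Lemma \ref{type2lambda} hold (possible by \cite[Remark 4.4]{AM}); after a totally ramified extension $M/K_{\mfq_K}$ one has good reduction $\Atil$ over $\F_q$, and $\lambda(\Frob_M)\equiv\beta$ for a Frobenius eigenvalue $\beta$ with $\beta\betab=q$ and integer trace $a=\beta+\betab$ satisfying $|a|<2\sqrt{q}$, i.e.\ $\beta\in\cFR(q)$. Setting $\psi=\lambda\theta_p^{-\frac{p+1}{4}}$ (here is where $p\equiv 3\bmod 4$ genuinely enters), Lemma \ref{type2lambda} (ii) gives $\psi(\Frob_M)^{24}=1$, hence $\psi(\Frob_M)^{6}=1$ since $p-1\equiv 2\bmod 4$. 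Now suppose $q^{\frac{p-1}{2}}\equiv +1\bmod p$. Running the mirror image of the computation the paper performs right after the lemma (cf.\ Lemma \ref{(b+bb)^2}), one gets $a^{2}=\beta^{2}+\betab^{2}+2q\equiv q\bigl(2+(\psi^{2}+\psi^{-2})(\Frob_M)\bigr)\equiv 4q$ or $q\bmod p$; since $0\le a^{2}\le 4q<p$ --- this, not ``nonvanishing mod $p$,'' is the real role of $q<\frac{p}{4}$ --- the congruences are equalities of integers, and both $a^{2}=4q$ and $a^{2}=q$ are impossible for a prime $q$ (as $2\sqrt{q}\notin\Q$ and $q$ is not a perfect square). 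The contradiction forces $q^{\frac{p-1}{2}}\equiv -1\bmod p$, whence $\left(\frac{q}{p}\right)=-1$. Without the reduction step and the Weil bound on $\beta$, your argument has no mechanism to break the residue/nonresidue symmetry, so the proposal has a genuine gap precisely at its crux.
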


%\begin{lem}
%\label{q^p-1/2}

%In the situation of Lemma \ref{q/p-1},
%we have
%$q^{\frac{p-1}{2}}\equiv -1\bmod{p}$.

%\end{lem}

%\begin{proof}

%Since $(q^{\frac{p-1}{2}})^2=q^{p-1}\equiv 1\bmod{p}$,
%we have
%$q^{\frac{p-1}{2}}\equiv \pm 1\bmod{p}$.
%Assume $q^{\frac{p-1}{2}}\equiv 1\bmod{p}$.
%Then
%$(q^{\frac{p+1}{4}})^2=q^{\frac{p+1}{2}}\equiv q\bmod{p}$.
%Then $\left(\frac{q}{p}\right)=1$,
%which contradicts $\left(\frac{q}{p}\right)=-1$.
%Therefore $q^{\frac{p-1}{2}}\equiv -1\bmod{p}$.

%\end{proof}

From now to the end of this section, assume that we are
in the situation of Lemma \ref{q/p-1}.
Take a prime $\mfq$ of $k$ above $q$.
By replacing $K$ if necessary,
we may assume the conditions (a), (b) in Lemma \ref{type2lambda}
and that $\mfq$ is ramified in $K/k$ (\cf \cite[Remark 4.4]{AM}).
Let $\mfq_K$ be the unique prime of $K$ above $\mfq$.
The abelian surface $A\otimes_K K_{\mfq_K}$ has good reduction
after a totally ramified finite extension $M/K_{\mfq_K}$.
Let $\Atil$ be the special fiber of the N\'{e}ron model of
$A\otimes_K M$.
Then $\Atil$ is a QM-abelian surface by $\cO$ over the prime field $\F_q$.
We have $\lambda(\Frob_M)\equiv\beta$
modulo a prime $\mfp_0$ of $\Q(\beta)$ above $p$
for a Frobenius eigenvalue
$\beta$ of $\Atil$,
where $\Frob_M$ is the arithmetic Frobenius of $\G_M$
($\subseteq\G_{K_{\mfq_K}}$).
We know $\beta\in\cFR(q)$ by \cite[p.97]{J}.
We also have $\lambda^{-1}\theta_p(\Frob_M)\equiv\betab\bmod{\mfp_0}$,
where $\betab$ is the complex conjugate of $\beta$.
Put $\psi:=\lambda\theta_p^{-\frac{p+1}{4}}$.
Then
$\psi^{12}=\lambda^{12}\theta_p^{-3(p+1)}
=\lambda^{12}\theta_p^{-6}$.
By Lemma \ref{type2lambda} (ii),
we have
$1=\lambda^{12}(\mfq\cO_K)\theta_p^{-6}(\mfq\cO_K)
=\psi^{12}(\mfq\cO_K)=\psi^{24}(\mfq_K)
=\psi^{24}(\Frob_M)=\psi(\Frob_M)^{24}$.
Here, note that $\psi(\Frob_M)$ is well-defined and
that the fourth equality holds because the extension
$M/K_{\mfq_K}$ is totally ramified.
Since $\F_p^{\times}$ is a cyclic group of order $p-1$ and
$p-1\equiv 2\bmod{4}$, we have $\psi(\Frob_M)^6=1$.

\begin{lem}
\label{(b+bb)^2}

$(\beta+\betab)^2\equiv 3q$ or $0\bmod{p}$.

\end{lem}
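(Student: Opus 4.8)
The plan is to extract a multiplicative relation among the reductions of $\beta$ and $\betab$ from the already-established identity $\psi(\Frob_M)^6=1$, and then to translate it into an honest congruence between integers. First I would record what is known about $\beta$: since $\beta\in\cFR(q)$, writing $\beta^2+a\beta+q=0$ gives $\beta+\betab=-a\in\Z$ and $\beta\betab=q$, so that $(\beta+\betab)^2=a^2$ is the integer whose value mod $p$ we must pin down. I would also compute $\theta_p(\Frob_M)$: multiplying the two congruences $\lambda(\Frob_M)\equiv\beta$ and $\lambda^{-1}\theta_p(\Frob_M)\equiv\betab$ modulo $\mfp_0$ yields $\theta_p(\Frob_M)\equiv\beta\betab=q\pmod{\mfp_0}$, hence $\theta_p(\Frob_M)\equiv q\pmod p$.

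Next I would evaluate $\psi=\lambda\theta_p^{-(p+1)/4}$ at $\Frob_M$ modulo $\mfp_0$, obtaining $\psi(\Frob_M)\equiv\beta\,q^{-(p+1)/4}$. Raising to the sixth power and using $\psi(\Frob_M)^6=1$ gives $\beta^6\equiv q^{6(p+1)/4}=(q^{(p+1)/2})^3\pmod{\mfp_0}$. Here Lemma \ref{q/p-1} enters: from $q^{(p-1)/2}\equiv-1\pmod p$ I get $q^{(p+1)/2}=q\cdot q^{(p-1)/2}\equiv-q$, so $q^{6(p+1)/4}\equiv(-q)^3=-q^3$, and therefore $\beta^6\equiv-q^3\pmod{\mfp_0}$.

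The final step is purely algebraic. Writing $-q^3=-(\beta\betab)^3$ turns the last congruence into $\beta^3(\beta^3+\betab^3)\equiv0\pmod{\mfp_0}$; since $\beta\betab=q$ is a unit mod $p$ (as $q<p$), $\beta$ is nonzero mod $\mfp_0$, so $\beta^3+\betab^3\equiv0\pmod{\mfp_0}$. Factoring
$$\beta^3+\betab^3=(\beta+\betab)\bigl((\beta+\betab)^2-3\beta\betab\bigr)=(\beta+\betab)\bigl((\beta+\betab)^2-3q\bigr)$$
and noting that both factors are integers, I would reduce modulo $\mfp_0$ --- equivalently modulo $p$, since $\mfp_0\cap\Z=(p)$ --- and use that $p$ is prime: one of the two integer factors vanishes mod $p$. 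This gives $(\beta+\betab)^2\equiv0$ or $(\beta+\betab)^2\equiv3q\pmod p$, as claimed.

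I expect no serious obstacle here; the only points requiring care are the bookkeeping between the two moduli (the character values live in $\F_p$, while $\beta$ lives in the residue field of $\Q(\beta)$ at $\mfp_0$, and both congruences ultimately descend to $\Z/p$), and the observation that $\beta\not\equiv0\pmod{\mfp_0}$, so that cancellation of $\beta^3$ is legitimate. The conceptual heart is simply recognizing that $\psi(\Frob_M)^6=1$ forces $\beta^3+\betab^3\equiv0$, after which the symmetric-function factorization does the rest.
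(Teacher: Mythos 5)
Your proof is correct, and it takes a recognizably different algebraic route from the paper's, though both rest on the same three inputs: $\psi(\Frob_M)^6=1$, $q^{(p-1)/2}\equiv -1 \bmod p$ (Lemma \ref{q/p-1}), and $\theta_p(\Frob_M)\equiv q$ (which you neatly rederive by multiplying the two stated congruences for $\lambda$ and $\lambda^{-1}\theta_p$). The paper evaluates $\beta^2+\betab^2$ directly in terms of character values, factors out $\theta_p(\Frob_M)^{(p+1)/2}\equiv q^{(p+1)/2}\equiv -q$, and obtains the case split at the level of $\F_p$ from $\psi(\Frob_M)^2+\psi(\Frob_M)^{-2}\in\{-1,2\}$ (the two possibilities for a sixth root of unity squared), giving $\beta^2+\betab^2\equiv q$ or $-2q$ and then adding $2\beta\betab=2q$. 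You instead compress everything into the single congruence $\beta^6\equiv -q^3 \bmod \mfp_0$, rewrite it as $\beta^3+\betab^3\equiv 0$ using the invertibility of $\beta$ modulo $\mfp_0$ (justified correctly, since $\beta\betab=q$ is prime to $p$), and produce the dichotomy only at the end, via the integer factorization $\beta^3+\betab^3=(\beta+\betab)\bigl((\beta+\betab)^2-3q\bigr)$ and Euclid's lemma in $\Z$. What your route buys is a cleaner bookkeeping of where each hypothesis acts --- a single relation in the residue field, then one application of primality of $p$ to honest integers, with the descent $\mfp_0\cap\Z=(p)$ made explicit --- whereas the paper's route keeps the intermediate values $\beta^2+\betab^2\equiv q$ or $-2q$ visible (information your argument discards, though nothing later in the paper uses it; Lemma \ref{b+bb} needs only the stated dichotomy). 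All steps in your write-up check out, including the parity point that $(p+1)/4\in\Z$ because type 2 forces $p\equiv 3\bmod 4$, which is implicit in the paper's definition of $\psi$.
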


\begin{proof}

We have
$\beta^2+\betab^2\equiv
\psi(\Frob_M)^2\theta_p(\Frob_M)^{\frac{p+1}{2}}
+\psi(\Frob_M)^{-2}\theta_p(\Frob_M)^{\frac{-p+3}{2}}
=\theta_p(\Frob_M)^{\frac{p+1}{2}}
(\psi(\Frob_M)^2+\psi(\Frob_M)^{-2})
=q^{\frac{p+1}{2}}(\psi(\Frob_M)^2+\psi(\Frob_M)^{-2})\bmod{p}$.
Since $\psi(\Frob_M)^6=1$, we see
$\psi(\Frob_M)^2+\psi(\Frob_M)^{-2}=-1$ or $2$.
Since $q^{\frac{p-1}{2}}\equiv -1\bmod{p}$, we have
$q^{\frac{p+1}{2}}\equiv -q\bmod{p}$.
Then $\beta^2+\betab^2\equiv q$ or $-2q\bmod{p}$,
and so $(\beta+\betab)^2\equiv 3q$ or $0\bmod{p}$.

\end{proof}

\begin{lem}
\label{b+bb}

%If $q<\frac{p}{4}$, then
$\beta+\betab=0$ or $|\beta+\betab|=3=q$.

\end{lem}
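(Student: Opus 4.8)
The plan is to promote the mod-$p$ congruence of Lemma \ref{(b+bb)^2} to an honest equality of integers by exploiting the size of $q$ relative to $p$, and then to use the primality of $q$ to isolate the exceptional case.

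First I would record the relevant integrality and bounds. Since $\beta\in\cFR(q)$, it is a root of $X^2+aX+q$ for some $a\in\Z$ with $|a|\leq 2\sqrt{q}$, and as already noted this inequality is in fact strict; because $a^2<4q$ the discriminant $a^2-4q$ is negative, so $\beta$ and its complex conjugate $\betab$ are precisely the two (genuinely non-real) roots of this polynomial. Consequently $\beta+\betab=-a\in\Z$ and $(\beta+\betab)^2=a^2<4q$. We are in the situation of Lemma \ref{q/p-1}, so $q<\frac{p}{4}$, i.e. $4q<p$; therefore both $(\beta+\betab)^2=a^2$ and $3q$ are nonnegative integers lying in the interval $[0,p)$.

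Next I would invoke Lemma \ref{(b+bb)^2}, which gives $(\beta+\betab)^2\equiv 3q$ or $0\bmod{p}$. Since on each side both quantities being compared are nonnegative integers strictly less than $p$, the congruence forces a genuine equality of integers: either $(\beta+\betab)^2=0$ or $(\beta+\betab)^2=3q$. I would then split into these two cases. If $(\beta+\betab)^2=0$, then $\beta+\betab=0$. If $(\beta+\betab)^2=3q$, then since $\beta+\betab=-a$ is an integer, $3q=a^2$ is a perfect square; as $q$ is prime, the factorization $3q=a^2$ is possible only when $q=3$ (for any prime $q\neq 3$ the integer $3q$ is squarefree, hence not a square), and in that case $|\beta+\betab|=|a|=3=q$. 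This gives the asserted dichotomy.

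The argument is at bottom a size comparison, so I do not expect a serious obstacle. The one point requiring genuine care is the passage from congruence to equality, which is exactly where the hypothesis $p>4q$ (equivalently the bound $q<\frac{p}{4}$ from Lemma \ref{q/p-1}) is needed to keep both residues inside a single period $[0,p)$; and the second, elementary but essential, observation is that primality of $q$ forbids $3q$ from being a perfect square unless $q=3$.
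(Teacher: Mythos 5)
Your proof is correct and follows essentially the same route as the paper: both use the bound $(\beta+\betab)^2<4q<p$ (from $\beta\in\cFR(q)$ and $q<\frac{p}{4}$) to upgrade the congruences of Lemma \ref{(b+bb)^2} to integer equalities, then conclude. The only difference is that you explicitly justify the step the paper leaves implicit, namely that $(\beta+\betab)^2=3q$ with $\beta+\betab=-a\in\Z$ forces $q=3$ since $3q$ is squarefree for any prime $q\neq 3$.
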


\begin{proof}

We have $(\beta+\betab)^2<4q$ since $\beta\in\cFR(q)$.
First assume 
$(\beta+\betab)^2\equiv 3q\bmod{p}$.
Then, since $|(\beta+\betab)^2-3q|\leq 3q<p$,
we have $(\beta+\betab)^2=3q$.
Therefore $q=3$ and $\beta+\betab=\pm 3$.
Next assume
$(\beta+\betab)^2\equiv 0\bmod{p}$.
Then, since $|(\beta+\betab)^2|<4q<p$,
we have $(\beta+\betab)^2=0$.
Therefore $\beta+\betab=0$.

\end{proof}

\begin{lem}
\label{BQ(-q)M2}

%If $\beta+\betab=0$ or $|\beta+\betab|=3=q$, then
$B\otimes_{\Q}\Q(\sqrt{-q})\cong\M_2(\Q(\sqrt{-q}))$.

\end{lem}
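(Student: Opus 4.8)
The plan is to reinterpret the desired statement as a splitting statement and read it off the endomorphism algebra of the reduction $\Atil$. Recall that $B\otimes_\Q\Q(\sqrt{-q})\cong\M_2(\Q(\sqrt{-q}))$ is equivalent to $\Q(\sqrt{-q})$ splitting $B$, and that the quaternion action makes the $\ell$-adic Tate module of $\Atil$ free of rank $2$ over $\cO\otimes_\Z\Z_\ell\cong\M_2(\Z_\ell)$, hence a sum of two copies of a rank-$2$ representation; so the characteristic polynomial of the $q$-Frobenius $\pi$ of $\Atil$ is the square $h(T)^2$ of the reduced polynomial $h(T)=T^2-(\beta+\betab)T+q$, whose roots are the Frobenius eigenvalue $\beta$ and its conjugate $\betab$. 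First I would feed Lemma~\ref{b+bb} into this: in the case $\beta+\betab=0$ one gets $h(T)=T^2+q$ and $\beta=\sqrt{-q}$, while in the case $q=3,\ |\beta+\betab|=3$ the discriminant of $h$ is $9-12=-3$; either way $\Q(\beta)=\Q(\sqrt{-q})$, so $\pi\notin\Q$ and $F:=\Q(\pi)=\Q(\sqrt{-q})$ is imaginary quadratic.

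Next I would pin down the endomorphism algebra $\End^0_{\F_q}(\Atil):=\End_{\F_q}(\Atil)\otimes_\Z\Q$ by Honda--Tate. Since $h$ is irreducible over $\Q$ (its discriminant is negative) and the Frobenius characteristic polynomial of $\Atil$ is $h(T)^2$, the isogeny class of $\Atil$ over $\F_q$ is that of $E^2$, where $E/\F_q$ is the elliptic curve attached to the Weil $q$-number $\beta$. Because $\beta=\pi\notin\Q$, the Honda--Tate invariant $e$ for $E$ equals $1$, so $\End^0_{\F_q}(E)=\Q(\beta)=F$, and consequently $\End^0_{\F_q}(\Atil)\cong\M_2(\End^0_{\F_q}(E))=\M_2(F)$. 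I expect this identification to be the main obstacle: one has to be careful that, although $\Atil$ is supersingular, over the prime field $\F_q$ the endomorphism algebra of $E$ is only the imaginary quadratic field $F$ (the extra quaternionic endomorphisms appear after base change to $\F_{q^2}$), so that $\End^0_{\F_q}(\Atil)$ is the split matrix algebra $\M_2(F)$ rather than $\M_2$ of a division algebra.

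Finally I would extract the splitting. The QM-structure of $\Atil$ over $\F_q$ gives a $\Q$-algebra embedding $B\hookrightarrow\End^0_{\F_q}(\Atil)=\M_2(F)$, and since $F$ is the center of $\M_2(F)$ and commutes with the image of $B$, this extends to an $F$-algebra homomorphism $B\otimes_\Q F\to\M_2(F)$. Now $B\otimes_\Q F$ is central simple over $F$ of dimension $4=\dim_F\M_2(F)$, and the homomorphism is nonzero, hence injective by simplicity, hence an isomorphism by the dimension count. Therefore $B\otimes_\Q\Q(\sqrt{-q})=B\otimes_\Q F\cong\M_2(F)=\M_2(\Q(\sqrt{-q}))$, as claimed. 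As a consistency check with Lemma~\ref{q/p-1} and the hypotheses of Theorem~\ref{mainthm}, this is precisely the assertion meant to clash with the standing hypothesis that some completely split prime $q$ satisfies $B\otimes_\Q\Q(\sqrt{-q})\not\cong\M_2(\Q(\sqrt{-q}))$, so the lemma will serve as the engine of the eventual contradiction.
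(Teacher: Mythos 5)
Your argument is correct, and its skeleton is the same as the paper's: combine Lemma \ref{b+bb} with a determination of $\End_{\F_q}(\Atil)\otimes_{\Z}\Q$. The difference is that the paper's proof is essentially a citation --- it invokes \cite[Theorem 2.1 (2) (4) and Proposition 2.3, p.98]{J}, which directly give $\End_{\F_q}(\Atil)\otimes_{\Z}\Q\cong\M_2(\Q(\sqrt{-q}))\cong B\otimes_{\Q}\Q(\sqrt{-q})$ once one knows $\beta+\betab=0$ or $|\beta+\betab|=3=q$ --- whereas you reconstruct Jordan's inputs from scratch: Tate's isogeny theorem plus Honda--Tate to get $\Atil\sim E^2$ over $\F_q$ with $\End_{\F_q}(E)\otimes_{\Z}\Q=\Q(\beta)=\Q(\sqrt{-q})$ (your computation of $\Q(\beta)$ in both cases of Lemma \ref{b+bb} is right, and you correctly flag the supersingular subtlety that over the prime field the endomorphism algebra is only the imaginary quadratic field, the quaternionic endomorphisms appearing only over $\F_{q^2}$), and then a central-simple-algebra argument converting the QM embedding $B\hookrightarrow\M_2(F)$ into an isomorphism $B\otimes_{\Q}F\cong\M_2(F)$ by simplicity and a dimension count --- which is in substance the content of Jordan's Proposition 2.3. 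Your route buys self-containedness at the cost of length; the paper buys brevity by outsourcing exactly these verifications to \cite{J}. The only nitpicks are cosmetic: $\beta=\pm\sqrt{-q}$ rather than $\sqrt{-q}$ in the trace-zero case, and one should note (as the paper does implicitly via the N\'{e}ron model) that the $\cO$-action descends to $\Atil$ over $\F_q$, so the embedding $B\hookrightarrow\End_{\F_q}(\Atil)\otimes_{\Z}\Q$ you use is indeed available over the prime field.
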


\begin{proof}

The number $\beta$ is a Frobenius eigenvalue of a QM-abelian surface $\Atil$
by $\cO$ over $\F_q$.
Then, by Lemma \ref{b+bb} and
\cite[Theorem 2.1 (2) (4) and Proposition 2.3, p.98]{J},
we conclude
$\End_{\F_q}(\Atil)\otimes_{\Z}\Q
\cong\M_2(\Q(\sqrt{-q}))
\cong B\otimes_{\Q}\Q(\sqrt{-q})$.

\end{proof}

\section{Classification of characters (II)}
\label{charII}

Let $k$ be a number field, and
let $(A,i)$ be a QM-abelian surface by $\cO$ over $k$.
For a prime number $p$ not dividing $d$,
assume that the representation $\rhob_{A,p}$ in (\ref{rhobar}) is reducible.
Then there is a 1-dimensional sub-representation of $\rhob_{A,p}$;
let $\nu$ be its associated character.
In this case notice that there is a left $\cO$-submodule $V$
of $A[p](\kb)$ with $\F_p$-dimension $2$ on which $\G_k$ acts by $\nu$,
and so the triple $(A,i,V)$ determines a point of $M_0^B(p)(k)$.
We know that $\nu^{12}$ is unramified at every prime of $k$
not dividing $p$ (\cite[Lemma 6.1]{AM}), and so
$\nu^{12}$ corresponds to a character of $\mfI_k(p)$.
%the
%ideal group $\mfI_k(p)$ consisting of
%fractional ideals of $k$ prime to $p$.
By abuse of notation, let denote also by $\nu^{12}$
the corresponding character
of $\mfI_k(p)$.
%Below we give the statements of lemmas, a theorem and a proposition,
%but omit the proofs since they are almost the same as (or simpler than) in the last section.

Define the finite sets
\noindent
$\cM'_1(k):=$
$$\Set{(\mfq,\varepsilon'_0,\beta_{\mfq})|
\mfq\in \cS,\ \varepsilon'_0=\sum_{\sigma\in\Gal(k/\Q)}a'_{\sigma}\sigma
\text{ with $a'_{\sigma}\in\{0,4,6,8,12 \}$},\ 
\beta_{\mfq}\in\cFR(\N(\mfq))},$$

\noindent
$\cM'_2(k):=\Set{\Norm_{k(\beta_{\mfq})/\Q}(\alpha_{\mfq}^{\varepsilon'_0}-\beta_{\mfq}^{12h_k})\in\Z|
(\mfq,\varepsilon'_0,\beta_{\mfq})\in\cM'_1(k)}\setminus\{0\}$,

\noindent
$\cN'_0(k):=\Set{\text{$l$ : prime number}|\text{$l$ divides some integer $m\in\cM'_2(k)$}}$,

%\noindent
%$\cT(k):=\Set{\text{$l'$ : prime number}|\text{$l'$ is divided
%by some prime $\mfq'\in \cS$}}
%\cup\{2,3\}$,

\noindent
$\cN'_1(k):=\cN'_0(k)\cup\cT(k)\cup\Ram(k)$.

We classify the character  $\nu$ as follows.

\begin{thm}[{\cite[Theorem 6.4]{AM}}]
\label{type23nu}

Assume that $k$ is Galois over $\Q$.
If $p\not\in\cN'_1(k)$
(and if $p$ does not divide $d$), then the character
$\nu:\G_k\longrightarrow \F_p^{\times}$
%associated to a non-CM point of $M_0^B(p)(k)$ 
is of one of the following types.

Type 2:
$\nu^{12}=\theta_p^6$ and $p\equiv 3\bmod{4}$.

Type 3:
There is an imaginary quadratic field $L$ satisfying the following
two conditions.

%\noindent
%(a)
%The prime number $p$ splits in $L$.

\noindent
(a)
The Hilbert class field $H_L$ of $L$ is contained in $k$.

\noindent
(b)
There is a prime $\mfp_L$ of $L$ lying over $p$
such that
$\nu^{12}(\mfa)\equiv\delta^{12}\bmod{\mfp_L}$ holds
for any fractional ideal $\mfa$ of $k$ prime to $p$.
Here $\delta$ is any element of $L$ such that
$\Norm_{k/L}(\mfa)=\delta\cO_L$.

\end{thm}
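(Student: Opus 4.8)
The plan is to run, directly for $\nu$ over $k$, the argument that establishes Theorem~\ref{type23phi} for $\varphi$. Here the QM-abelian surface $(A,i)$ is already defined over $k$, so no transfer map intervenes; this is exactly why the weight set $\{0,4,6,8,12\}$ and the exponent $12$ in $\cM'_1(k),\cM'_2(k)$ replace the ``doubled'' data $\{0,8,12,16,24\}$ and $24$ of $\cM_1(k),\cM_2(k)$. First I would analyze $\nu$ locally at $p$. Restricting $\rhob_{A,p}$ to the inertia groups $I_{\mfp}$ at the primes $\mfp\mid p$ and using the fundamental-character analysis of the reduction of the $p$-divisible group of $A$ (Momose's method), one shows that $\nu^{12}|_{I_{\mfp}}$ equals a power $\theta_p^{a_{\mfp}}|_{I_{\mfp}}$ with $a_{\mfp}\in\{0,4,6,8,12\}$; the denominators $2,3$ behind the intermediate values $4,6,8$ come from fundamental characters of level up to $2$ and from the ramification (of index dividing $6$) needed to acquire good reduction, and the $12$th power clears them. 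Since $k/\Q$ is Galois, $\Gal(k/\Q)$ permutes the primes above $p$, so these local weights assemble into one $\varepsilon'_0=\sum_{\sigma\in\Gal(k/\Q)}a'_{\sigma}\sigma$ with $a'_{\sigma}\in\{0,4,6,8,12\}$ such that, via class field theory and the already-noted fact that $\nu^{12}$ is unramified outside $p$, one has $\nu^{12}(\alpha\cO_k)\equiv\alpha^{\varepsilon'_0}\bmod\mfp_0$ for every $\alpha\in\cO_k$ prime to $p$ and a suitable prime $\mfp_0\mid p$.

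Next I would exploit the auxiliary split primes. Because $M^B$ is proper, every QM-abelian surface has potential good reduction everywhere; in particular, for $\mfq\in\cS$ (over a prime $q=\N(\mfq)$ splitting completely in $k$ with $q\nmid 6h_k$), $A$ acquires good reduction over a finite extension of $k_{\mfq}$ whose ramification index divides the order of the relevant group $\Aut_{\cO}$ (one of $2,4,6$) and is therefore absorbed by the $12$th power. The reduction $\Atil$ is a QM-abelian surface by $\cO$ over $\F_q$ whose Frobenius eigenvalue $\beta_{\mfq}$ lies in $\cFR(q)$ by \cite[p.97]{J}, and tracing $\nu$ through the reduction gives $\nu^{12}(\mfq)\equiv\beta_{\mfq}^{12}\bmod\mfp_0$. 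Combining this with Step~1 and $\mfq^{h_k}=\alpha_{\mfq}\cO_k$ yields
$$\alpha_{\mfq}^{\varepsilon'_0}\equiv\nu^{12}(\alpha_{\mfq}\cO_k)=\nu^{12}(\mfq)^{h_k}\equiv\beta_{\mfq}^{12h_k}\bmod\mfp_0,$$
so $\mfp_0$ divides $\alpha_{\mfq}^{\varepsilon'_0}-\beta_{\mfq}^{12h_k}$ and hence $p\mid\Norm_{k(\beta_{\mfq})/\Q}(\alpha_{\mfq}^{\varepsilon'_0}-\beta_{\mfq}^{12h_k})$. Since $(\mfq,\varepsilon'_0,\beta_{\mfq})\in\cM'_1(k)$, this norm is either $0$ or a member of $\cM'_2(k)$; as $p\not\in\cN'_1(k)\supseteq\cN'_0(k)$, the latter is impossible, so the norm vanishes and I obtain the genuine algebraic identity $\alpha_{\mfq}^{\varepsilon'_0}=\beta_{\mfq}^{12h_k}$ in $\C$ for every $\mfq\in\cS$.

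Finally I would read off the type from this identity by comparing archimedean absolute values, using $|\beta_{\mfq}|=\sqrt{q}$ and $\prod_{\sigma}|\sigma(\alpha_{\mfq})|=\N(\mfq)^{h_k}=q^{h_k}$. If all $a'_{\sigma}$ are equal they must equal $6$, so the weight formula of Step~1 gives $\nu^{12}=\theta_p^6$ on principal ideals; a short supplementary argument (as in the $\varphi$-case of Theorem~\ref{type23phi}) upgrades the agreement on the $\cS$-generators of $Cl_k$ to all of $\mfI_k(p)$, and a parity argument on $\F_p^{\times}$ forcing $4\nmid p-1$ (exactly as in the computation preceding Lemma~\ref{(b+bb)^2}) yields Type~2. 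If the $a'_{\sigma}$ are not all equal, the identities $\alpha_{\mfq}^{\varepsilon'_0}=\beta_{\mfq}^{12h_k}$ over a generating set of the class group force every $\beta_{\mfq}$ to generate one fixed imaginary quadratic field $L=\Q(\beta_{\mfq})$, force $\varepsilon'_0$ to be a CM type for $L$, and force $H_L\subseteq k$ for the class-group relations to be consistent; rewriting the congruence through $\Norm_{k/L}(\mfa)=\delta\cO_L$ then gives $\nu^{12}(\mfa)\equiv\delta^{12}\bmod\mfp_L$, which is Type~3.

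The main obstacle is this last step: showing that a non-constant weight vector $\varepsilon'_0$ is forced to be a CM type for a single imaginary quadratic $L$ with $H_L\subseteq k$, and extracting the precise Type~3 congruence with exponent $12$. This requires propagating the finitely many algebraic identities $\alpha_{\mfq}^{\varepsilon'_0}=\beta_{\mfq}^{12h_k}$ (one per generator of $Cl_k$) into a statement about \emph{all} fractional ideals, which is where both the Galois hypothesis on $k$ and the containment $H_L\subseteq k$ are genuinely used. The secondary difficulty is Step~1, namely pinning the local weights to $\{0,4,6,8,12\}$: this rests on the fundamental-character structure of the reduction of $A$ at $p$ and is the source of the intermediate values $4,6,8$.
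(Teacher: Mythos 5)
Your outline is essentially the argument behind this statement: the paper itself gives no proof --- the theorem is imported verbatim from \cite[Theorem 6.4]{AM} --- and its surrounding definitions ($\cM'_1(k)$, $\cM'_2(k)$, $\cN'_0(k)$) are exactly the bookkeeping for the Momose-style proof you describe: local weights $a'_\sigma\in\{0,4,6,8,12\}$ at the primes above $p$, the congruence $\alpha_{\mfq}^{\varepsilon'_0}\equiv\beta_{\mfq}^{12h_k}$ at the class-group generators $\mfq\in\cS$ promoted to an exact identity because $p\notin\cN'_0(k)$, and the archimedean comparison via $|\beta_{\mfq}|=\sqrt{q}$ separating the constant-weight (Type 2, with the mod-squares argument giving $p\equiv 3\bmod 4$) and CM (Type 3, with $H_L\subseteq k$) cases. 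Your absolute-value step also correctly accounts for the absence of a ``Type 1'' in the statement --- properness of the Shimura curve gives potential good reduction everywhere, so $\beta_{\mfq}\in\cFR(q)$ is genuinely imaginary quadratic of absolute value $\sqrt{q}$ and a constant weight is forced to be $6$ --- which is precisely where the QM case diverges from Momose's elliptic-curve classification.
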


From now to Lemma \ref{q/p-1nu}, assume that $k$ is Galois over $\Q$.

\begin{lem}[{\cite[Lemma 6.6]{AM}}]
\label{type2nu}

Suppose $p\not\in\cN'_1(k)$.
If $\nu$ is of type 2, then
there is a character $\psi':\G_k\longrightarrow\F_p^{\times}$
such that $\psi'^6=1$ and
$\nu=\psi'\theta_p^{\frac{p+1}{4}}$.

\end{lem}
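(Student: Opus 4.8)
The plan is to exhibit the character $\psi'$ explicitly and then verify the two required properties by a short computation with exponents modulo $p-1$. Since $\nu$ is of type 2 we have $p\equiv 3\bmod 4$, so $(p+1)/4$ is a well-defined integer and the character $\theta_p^{(p+1)/4}:\G_k\longrightarrow\F_p^{\times}$ makes sense. The only natural candidate is then
$$\psi':=\nu\,\theta_p^{-\frac{p+1}{4}},$$
which is a character $\G_k\longrightarrow\F_p^{\times}$ satisfying $\nu=\psi'\theta_p^{(p+1)/4}$ by construction. Thus the entire content of the lemma reduces to proving $\psi'^{6}=1$.

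First I would raise $\psi'$ to the twelfth power to clear the denominator in the exponent:
$$\psi'^{12}=\nu^{12}\,\theta_p^{-3(p+1)}.$$
Because $\theta_p$ takes values in $\F_p^{\times}$, which is cyclic of order $p-1$, the exponent of $\theta_p$ may be reduced modulo $p-1$; here $-3(p+1)\equiv-6\bmod{p-1}$, so $\theta_p^{-3(p+1)}=\theta_p^{-6}$. The type 2 hypothesis $\nu^{12}=\theta_p^{6}$ then gives $\psi'^{12}=\theta_p^{6}\theta_p^{-6}=1$, the trivial character.

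It remains to descend from $\psi'^{12}=1$ to $\psi'^{6}=1$, and this is the step where the congruence $p\equiv 3\bmod 4$ does the real work. For each $\sigma\in\G_k$ the value $\psi'(\sigma)$ lies in the group of twelfth roots of unity of $\F_p^{\times}$, whose order is $\gcd(12,p-1)$. Writing $p-1=2m$ with $m$ odd (possible precisely because $p\equiv 3\bmod 4$ forces $4\nmid p-1$), one finds $\gcd(12,p-1)=2\gcd(6,m)\in\{2,6\}$, which in either case divides $6$. Hence $\psi'(\sigma)^{12}=1$ already implies $\psi'(\sigma)^{6}=1$, and therefore $\psi'^{6}=1$ as characters, completing the proof.

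I expect the main (and essentially only) subtlety to be this final descent: without the restriction $p\equiv 3\bmod 4$ the twelfth roots of unity in $\F_p^{\times}$ need not all be sixth roots, so one genuinely uses that the $2$-part of $p-1$ is exactly $2$. Everything else is a direct manipulation of the defining relations, with no cohomological or geometric input needed beyond the classification already recorded in Theorem \ref{type23nu}.
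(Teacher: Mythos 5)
Your proof is correct and follows essentially the same route as the source: the paper's parallel computation in Section \ref{charI} (defining $\psi=\lambda\theta_p^{-\frac{p+1}{4}}$, reducing the exponent via $\theta_p^{-3(p+1)}=\theta_p^{-6}$, and using $p-1\equiv 2\bmod 4$ to pass from twelfth to sixth roots of unity) is exactly your argument with $\nu$ in place of $\lambda$. Your explicit verification that $\gcd(12,p-1)\in\{2,6\}$ just spells out the descent step the paper states in one line, so there is nothing to add.
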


\begin{lem}[{\cite[Lemma 6.7]{AM}}]
\label{q/p-1nu}

Suppose $p\not\in\cN'_1(k)$.
Assume that $\nu$ is of type 2.
Let $q<\frac{p}{4}$ be a prime number
which splits completely in $k$.
Then we have
$\left(\frac{q}{p}\right)=-1$
and
$q^{\frac{p-1}{2}}\equiv -1\bmod{p}$.

\end{lem}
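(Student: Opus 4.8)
The two assertions are equivalent by Euler's criterion, since $\left(\frac{q}{p}\right)\equiv q^{(p-1)/2}\bmod p$ and $\left(\frac{q}{p}\right)=\pm1$; so it suffices to prove $\left(\frac{q}{p}\right)=-1$, and I will argue by contradiction, assuming $\left(\frac{q}{p}\right)=1$. The plan, parallel to the proof of the $\varphi$-analogue in Lemma \ref{q/p-1}, is to extract a Frobenius eigenvalue of a reduction of $A$ from the value of $\nu$ at a suitable Frobenius, and then to combine the type-2 description of $\nu$ from Lemma \ref{type2nu} with the sharp bound $q<\frac p4$ to force an impossible integer equation.

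First I would set up the reduction. Take a prime $\mfq$ of $k$ above $q$; since $q$ splits completely, $\N(\mfq)=q$. As a QM-abelian surface has potential good reduction, $A\otimes_k k_{\mfq}$ acquires good reduction over a totally ramified finite extension $M/k_{\mfq}$; let $\Atil$ be the special fiber of the corresponding N\'eron model, a QM-abelian surface by $\cO$ over $\F_q$. Reducing modulo $p$ the two-dimensional representation attached to $A[p]$, the value $\nu(\Frob_M)$ is congruent, modulo some prime $\mfp_0$ of $\Qb$ above $p$, to a Frobenius eigenvalue $\beta\in\cFR(\N(\mfq))=\cFR(q)$ of $\Atil$, so that $\beta^2+a\beta+q=0$ with $a\in\Z$, $|a|<2\sqrt q$, and $\beta\betab=q$ (the discriminant being negative, $\betab$ is the complex conjugate, the other root). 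Because $M/k_{\mfq}$ is totally ramified and $\theta_p$ is unramified at $\mfq$, I also have $\theta_p(\Frob_M)\equiv q\bmod p$, while $\psi'(\Frob_M)^6=1$ since $\psi'^6=1$ by Lemma \ref{type2nu}.

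Now I would run the numerical computation. Writing $\zeta:=\psi'(\Frob_M)$, Lemma \ref{type2nu} gives $\beta\equiv\nu(\Frob_M)=\zeta\,q^{(p+1)/4}\bmod\mfp_0$, and from $\beta\betab=q$ one gets $\betab\equiv\zeta^{-1}q^{(3-p)/4}\bmod\mfp_0$. Squaring and using $q^{p-1}\equiv1\bmod p$ yields $\beta^2+\betab^2\equiv q^{(p+1)/2}(\zeta^2+\zeta^{-2})\bmod\mfp_0$, whence $(\beta+\betab)^2\equiv q^{(p+1)/2}(\zeta^2+\zeta^{-2})+2q\bmod p$. Since $\zeta^6=1$, the quantity $\zeta^2+\zeta^{-2}$ equals $2$ or $-1$; and under the contradiction hypothesis $\left(\frac qp\right)=1$ we have $q^{(p+1)/2}\equiv q\bmod p$. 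Thus $(\beta+\betab)^2\equiv 4q$ or $q\bmod p$. But $(\beta+\betab)^2=a^2$ is a nonnegative integer with $a^2<4q$, and $q<\frac p4$ forces $4q<p$ and $3q<p$; so $a^2\equiv 4q\bmod p$ is impossible (both $a^2$ and $4q$ lie in $[0,p)$ with $a^2<4q$), while $a^2\equiv q\bmod p$ forces $a^2=q$, impossible since $q$ is prime. Either way I reach a contradiction, so $\left(\frac qp\right)=-1$.

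The main obstacle I expect is the reduction-theoretic input of the second paragraph: making precise, and well-defined, the congruence $\nu(\Frob_M)\equiv\beta$ relating the character value to a Frobenius eigenvalue of $\Atil$, together with the verification that passing to the totally ramified extension $M$ leaves the values of $\theta_p$ and $\psi'$ at Frobenius undisturbed. Once this dictionary between $\nu$ and $\cFR(q)$ is in place, the remaining computation is elementary, and it is precisely the hypothesis $q<\frac p4$ (rather than merely $q<p$) that makes the two congruences collapse to the impossible integer equations $a^2=4q$ and $a^2=q$.
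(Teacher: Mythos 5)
Your proof is correct and takes essentially the same route as the paper's machinery: the paper cites [AM, Lemma 6.7] without reproducing its proof, but your argument — reducing $A$ at a totally ramified extension above $\mfq$, matching $\nu(\Frob_M)$ with a Frobenius eigenvalue $\beta\in\cFR(q)$, writing $\nu=\psi'\theta_p^{\frac{p+1}{4}}$ via Lemma~\ref{type2nu}, and deriving $(\beta+\betab)^2\equiv 4q$ or $q \bmod p$ under the contradiction hypothesis $\left(\frac{q}{p}\right)=1$ — is precisely the eigenvalue-congruence computation the paper itself runs in Lemmas~\ref{(b+bb)^2} and \ref{b+bb}, executed in reverse. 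The numerical endgame (using $a^2<4q<p$, with strictness from the irrationality of $2\sqrt{q}$, to upgrade the congruences to impossible integer equations) is also the paper's standard device, so no gap remains beyond the reduction dictionary you correctly attribute to Jordan's results.
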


We can show the following lemma in the same way 
(Lemma \ref{(b+bb)^2} -- Lemma \ref{BQ(-q)M2})
as in the last section.

\begin{lem}
\label{BQ(-q)M2II}

In the situation of Lemma \ref{q/p-1nu}, we have
$B\otimes_{\Q}\Q(\sqrt{-q})\cong\M_2(\Q(\sqrt{-q}))$.

\end{lem}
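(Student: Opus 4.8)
The plan is to transcribe the argument of Lemmas \ref{(b+bb)^2}--\ref{BQ(-q)M2}, with the character $\lambda$ (there defined over the quadratic extension $K$) replaced by the character $\nu$ of the present section (now defined over $k$ itself), and the auxiliary character $\psi$ replaced by the $\psi'$ of Lemma \ref{type2nu}. First I would fix a prime $\mfq$ of $k$ above $q$; since $q$ splits completely in $k$ we have $\N(\mfq)=q$. The abelian surface $A\otimes_k k_{\mfq}$ acquires good reduction over a totally ramified finite extension $M/k_{\mfq}$; let $\Atil$ be the special fiber of the corresponding N\'{e}ron model, a QM-abelian surface by $\cO$ over $\F_q$. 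Exactly as in the paragraph preceding Lemma \ref{(b+bb)^2} (appealing to \cite[p.97]{J}), one obtains a Frobenius eigenvalue $\beta\in\cFR(q)$ of $\Atil$ with $\nu(\Frob_M)\equiv\beta$ and $\nu^{-1}\theta_p(\Frob_M)\equiv\betab$ modulo a prime $\mfp_0$ of $\Q(\beta)$ above $p$, where $\Frob_M$ is the arithmetic Frobenius of $\G_M\subseteq\G_{k_{\mfq}}$.

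Next I would invoke Lemma \ref{type2nu} to write $\nu=\psi'\theta_p^{(p+1)/4}$ with $\psi'^6=1$, so that $\psi'(\Frob_M)^6=1$. This replaces the derivation of $\psi(\Frob_M)^6=1$ in Section \ref{charI}, which there required Lemma \ref{type2lambda}(ii) together with $p-1\equiv 2\bmod 4$; here the relation is supplied directly and globally. Substituting $\nu=\psi'\theta_p^{(p+1)/4}$ into $\beta^2+\betab^2$ and using $\theta_p(\Frob_M)\equiv q$ and Fermat's little theorem, the computation of Lemma \ref{(b+bb)^2} goes through verbatim and gives $\beta^2+\betab^2\equiv q^{(p+1)/2}(\psi'(\Frob_M)^2+\psi'(\Frob_M)^{-2})\bmod p$. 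Since $\psi'(\Frob_M)$ is a sixth root of unity, the parenthesis equals $-1$ or $2$; and since $q^{(p-1)/2}\equiv -1\bmod p$ by Lemma \ref{q/p-1nu}, we have $q^{(p+1)/2}\equiv -q$. Combined with $\beta\betab=q$, this yields $(\beta+\betab)^2\equiv 3q$ or $0\bmod p$, the analogue of Lemma \ref{(b+bb)^2}.

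This dichotomy is then turned into an integral statement exactly as in Lemma \ref{b+bb}: because $(\beta+\betab)^2<4q<p$, the congruence forces either $(\beta+\betab)^2=3q$ (whence $q=3$ and $\beta+\betab=\pm 3$) or $(\beta+\betab)^2=0$ (whence $\beta+\betab=0$), so $\beta+\betab=0$ or $|\beta+\betab|=3=q$. Finally, feeding this dichotomy together with the fact that $\beta$ is a Frobenius eigenvalue of the QM-abelian surface $\Atil$ over $\F_q$ into \cite[Theorem 2.1 (2) (4) and Proposition 2.3, p.98]{J} identifies $\End_{\F_q}(\Atil)\otimes_{\Z}\Q\cong\M_2(\Q(\sqrt{-q}))$, which is in turn isomorphic to $B\otimes_{\Q}\Q(\sqrt{-q})$, as desired.

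Since the skeleton coincides with that of Section \ref{charI}, I do not anticipate a genuine obstacle. The one point that wants care is verifying that the reduction-theoretic inputs --- good reduction after a totally ramified extension, the congruences $\nu(\Frob_M)\equiv\beta$ and $\nu^{-1}\theta_p(\Frob_M)\equiv\betab$, and $\beta\in\cFR(q)$ --- remain available in the present setting where $(A,i)$ is already defined over $k$; these follow from the same results of Jordan used in the previous section. The only structural simplification is that the relation $\psi'^6=1$ now comes for free from Lemma \ref{type2nu}, rather than being extracted from the inertia and class-group considerations underlying Lemma \ref{type2lambda}.
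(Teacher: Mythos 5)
Your proposal is correct and is essentially the paper's own proof: the paper simply states that Lemma \ref{BQ(-q)M2II} follows ``in the same way (Lemma \ref{(b+bb)^2} -- Lemma \ref{BQ(-q)M2}) as in the last section,'' and you have transcribed exactly that argument, with $\nu$ over $k$ in place of $\lambda$ over $K$ and with $\beta\in\cFR(q)$, the mod-$\mfp_0$ congruences, and Jordan's results used identically. You also correctly identify the one genuine simplification, namely that $\psi'(\Frob_M)^6=1$ comes directly from Lemma \ref{type2nu} rather than from the inertia and class-group considerations behind Lemma \ref{type2lambda}.
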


\begin{thm}
\label{irred}

Let $k$ be a finite Galois extension of $\Q$ which does not contain
the Hilbert class field of any imaginary quadratic field.
Assume that there is a prime number $q$ which splits completely in $k$
and satisfies $B\otimes_{\Q}\Q(\sqrt{-q})\not\cong\M_2(\Q(\sqrt{-q}))$.
Let $p>4q$ be a prime number which also satisfies
$p\nmid d$ and $p\not\in\cN'_1(k)$.
Then the representation
$$\rhob_{A,p}:\G_k\longrightarrow\GL_2(\F_p)$$
is irreducible.

\end{thm}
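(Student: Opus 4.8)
The plan is to argue by contradiction: suppose $\rhob_{A,p}$ is reducible. Then by the discussion opening Section \ref{charII}, there is a one-dimensional sub-representation with associated character $\nu:\G_k\to\F_p^{\times}$, and Theorem \ref{type23nu} applies (using $p\nmid d$ and $p\not\in\cN'_1(k)$) to force $\nu$ into Type 2 or Type 3. The strategy is to rule out each type separately and thereby reach a contradiction with the hypotheses on $k$ and on $q$.

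\medskip

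First I would eliminate Type 3. In this case Theorem \ref{type23nu} produces an imaginary quadratic field $L$ whose Hilbert class field $H_L$ is contained in $k$. But by hypothesis $k$ does not contain the Hilbert class field of any imaginary quadratic field, so Type 3 is immediately impossible. This step is essentially a direct invocation of the hypothesis and should be short.

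\medskip

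Next I would eliminate Type 2, which is where the real content lies. Here I would use the prime $q$ furnished in the hypotheses: it splits completely in $k$, and from $p>4q$ we get $q<\frac{p}{4}$, so Lemma \ref{q/p-1nu} applies and yields $q^{\frac{p-1}{2}}\equiv-1\bmod p$. Then Lemma \ref{BQ(-q)M2II} (the analogue, in the $\nu$ setting, of the chain of Lemmas \ref{(b+bb)^2}--\ref{BQ(-q)M2}) gives $B\otimes_{\Q}\Q(\sqrt{-q})\cong\M_2(\Q(\sqrt{-q}))$. This directly contradicts the standing hypothesis that $B\otimes_{\Q}\Q(\sqrt{-q})\not\cong\M_2(\Q(\sqrt{-q}))$. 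Hence Type 2 is also impossible.

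\medskip

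Having excluded both types, the assumption of reducibility is untenable, so $\rhob_{A,p}$ must be irreducible. The main obstacle in this argument is not conceptual but bookkeeping: one must verify that all the hypotheses needed by Lemmas \ref{q/p-1nu} and \ref{BQ(-q)M2II} are in force under the weaker numerical assumptions of Theorem \ref{irred} (note that the present statement does not explicitly list $p\geq 11$ or $p\ne 13$, so I would confirm that the $\nu$-side lemmas in Section \ref{charII} indeed require only $p\not\in\cN'_1(k)$, as their statements suggest, rather than the extra constraints appearing on the $\varphi$-side in Section \ref{charI}). Once that compatibility is checked, the contradiction is immediate and the proof is complete.
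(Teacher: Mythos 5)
Your proposal is correct and follows essentially the same route as the paper's own proof: assume reducibility, apply Theorem \ref{type23nu}, rule out Type 3 directly from the hypothesis that $k$ contains no Hilbert class field of an imaginary quadratic field, and rule out Type 2 via Lemma \ref{q/p-1nu} and Lemma \ref{BQ(-q)M2II}, contradicting $B\otimes_{\Q}\Q(\sqrt{-q})\not\cong\M_2(\Q(\sqrt{-q}))$. Your bookkeeping concern resolves as you suspected: the $\nu$-side statements in Section \ref{charII} are formulated with only $p\not\in\cN'_1(k)$ (no $p\geq 11$, $p\ne 13$ conditions), so the hypotheses of Theorem \ref{irred} suffice.
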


\begin{proof}

Assume that $\rhob_{A,p}$ is reducible.
Then the associated character $\nu$ is of type 2 in Theorem \ref{type23nu},
because $k$ does not contain
the Hilbert class field of any imaginary quadratic field.
By Lemma \ref{BQ(-q)M2II}, we have
$B\otimes_{\Q}\Q(\sqrt{-q})\cong\M_2(\Q(\sqrt{-q}))$,
which is a contradiction.

\end{proof}

\noindent
(Proof of Theorem \ref{mainthm})

Let $k$ be a finite Galois extension of $\Q$ which does not contain
the Hilbert class field of any imaginary quadratic field,
and let $q$ be a prime number which splits completely in $k$
and satisfies $B\otimes_{\Q}\Q(\sqrt{-q})\not\cong\M_2(\Q(\sqrt{-q}))$.
Let $p>4q$ be a prime number which also satisfies
$p\geq 11$, $p\ne 13$ and $p\nmid d$.
Take a point $x\in M_0^B(p)(k)$.

(1)
Suppose $B\otimes_{\Q}k\cong\M_2(k)$.

(1-i)
Assume $\Aut(x)\ne\{\pm 1\}$
or $\Aut(x')\not\cong\Z/4\Z$.
Then $x$ is represented by a triple $(A,i,V)$ defined over $k$
by Proposition \ref{fieldM0Bp} (1),
and the representation $\rhob_{A,p}$ is reducible.
By Theorem \ref{irred}, we have $p\in\cN'_1(k)$.

(1-ii)
Assume otherwise (i.e. $\Aut(x)=\{\pm 1\}$
and $\Aut(x')\cong\Z/4\Z$).
Then $x$ is represented by a triple $(A,i,V)$ defined over a quadratic extension of $k$
by Proposition \ref{fieldM0Bp} (2),
and we have a character
$\varphi:\G_k\longrightarrow\F_p^{\times}$ as in (\ref{phi}).
By Theorem \ref{type23phi} and Lemma \ref{BQ(-q)M2},
we have $p\in\cN_1(k)$.

(2)
Suppose $B\otimes_{\Q}k\not\cong\M_2(k)$.
Further assume that $x$ is not an elliptic point of order $2$ or $3$;
this implies $\Aut(x)=\{\pm 1\}$.
By the same argument as in (1-ii),
we conclude $p\in\cN_1(k)$.

\qed

\section{Examples}
\label{Ex}

The genus of the Shimura curve $M^B$ is $0$ if and only if
$d\in\{6,10,22\}$ (\cite[Lemma 3.1, p.168]{A1}).
The defining equations of such $M^B$'s are the following by \cite[Theorem 1-1, p.279]{K}.

\begin{equation*}
\label{eqMB}
\begin{cases}
d=6\ :\ x^2+y^2+3=0,\\
d=10\ :\ x^2+y^2+2=0,\\
d=22\ :\ x^2+y^2+11=0.
\end{cases}
\end{equation*}

\noindent
In these cases, for a field $k$ of characteristic $0$ the condition $M^B(k)\ne\emptyset$
implies $M^B\otimes_{\Q}k\cong\pP^1_k$, and so
$\sharp M^B(k)=\infty$.

In the following proposition,
we give some examples of Theorem \ref{mainthm}.

\begin{prop}
\label{example}

Let $d\in\{10,22\}$ and $k\in\{\Q(\sqrt{-5},\sqrt{7}), \Q(\zeta_{13})\}$.
Then we have the following.

(1)
$k$ does not contain
the Hilbert class field of any imaginary quadratic field.

(2)
The least prime number $q$ that splits completely in $k$ and satisfies
$B\otimes_{\Q}\Q(\sqrt{-q})\not\cong\M_2(\Q(\sqrt{-q}))$ is
$29$ (\resp $29$, \resp $79$, \resp $79$)
for
$(d,K)=(10,\Q(\sqrt{-5},\sqrt{7}))$
(\resp $(22,\Q(\sqrt{-5},\sqrt{7}))$,
\resp $(10,\Q(\zeta_{13}))$,
\resp $(22,\Q(\zeta_{13}))$).

(3)
$\sharp M^B(k)=\infty$.

(4)
$B\otimes_{\Q}k\cong\M_2(k)$.

(5)
$M_0^B(p)(k)=\emptyset$
for every sufficiently large prime number $p$.

\end{prop}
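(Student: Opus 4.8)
The plan is to verify the five assertions largely independently, with (5) being a direct appeal to Theorem \ref{mainthm}. Both $\Q(\sqrt{-5},\sqrt{7})$ and $\Q(\zeta_{13})$ are Galois over $\Q$, so the role of (1)--(4) is precisely to check that these fields meet the hypotheses of case (1) of that theorem.

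For (1) I would use that if the Hilbert class field $H_L$ of an imaginary quadratic field $L$ lies in $k$, then in particular $L\subseteq k$, so $L$ must be one of the imaginary quadratic subfields of $k$. For $k=\Q(\zeta_{13})$ the Galois group is cyclic of order $12$, whence $k$ has a unique quadratic subfield, the \emph{real} field $\Q(\sqrt{13})$; thus $k$ contains no imaginary quadratic field and (1) is immediate. For $k=\Q(\sqrt{-5},\sqrt{7})$ the imaginary quadratic subfields are $\Q(\sqrt{-5})$ and $\Q(\sqrt{-35})$, each of class number $2$; I would compute their Hilbert (= genus) class fields $\Q(i,\sqrt{5})$ and $\Q(\sqrt{5},\sqrt{-7})$ and note that, all three fields being biquadratic of degree $4$, containment would force equality, which fails upon comparing the lists of quadratic subfields.

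For (2) and (4) the common tool is the Hasse invariant: $B\otimes_{\Q}F$ splits at a place $w\mid\ell$ of $F$ exactly when $[F_w:\Q_\ell]\cdot\mathrm{inv}_\ell(B)$ vanishes. Hence over a quadratic field $\Q(\sqrt{-q})$ the algebra $B\otimes_{\Q}\Q(\sqrt{-q})$ is non-split iff some ramified prime $\ell$ of $B$ ($\ell\in\{2,5\}$ for $d=10$, $\ell\in\{2,11\}$ for $d=22$) splits in $\Q(\sqrt{-q})$, while $B\otimes_{\Q}k$ splits iff every prime of $k$ above each such $\ell$ has even local degree (the split primes of $B$ and, by total imaginarity, the archimedean places imposing no condition). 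Part (4) then reduces to computing the decomposition of $2,5,11$ in $k$ (for $\Q(\zeta_{13})$ via the order of $\ell$ modulo $13$; for $\Q(\sqrt{-5},\sqrt{7})$ via its three quadratic subfields) and checking the local degrees are even. Part (2) becomes a finite search: the primes splitting completely in $k$ are cut out by a congruence/residue condition ($q\equiv 1\bmod 13$ for $\Q(\zeta_{13})$; $\left(\tfrac{-5}{q}\right)=\left(\tfrac{7}{q}\right)=1$ for $\Q(\sqrt{-5},\sqrt{7})$), and for each such $q$ in increasing order I would test whether some ramified prime of $B$ splits in $\Q(\sqrt{-q})$ via $\left(\tfrac{-q}{\ell}\right)=1$ (odd $\ell$) or $-q\equiv 1\bmod 8$ ($\ell=2$), stopping at the first success. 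Confirming that every smaller completely split prime fails is the most laborious step and the main obstacle.

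For (3) I would invoke the genus-$0$ discussion in this section: since $M^B$ has genus $0$ for $d\in\{10,22\}$, it suffices to exhibit one point, for $M^B(k)\ne\emptyset$ already forces $M^B\otimes_{\Q}k\cong\pP^1_k$ and hence $\sharp M^B(k)=\infty$. Writing the conic as $x^2+y^2+C=0$ with $C=2$ (\resp $C=11$), a $k$-point exists iff $-C$ is a sum of two squares in $k$, i.e.\ iff the quaternion algebra $(-1,-C)_k$ splits; a short Hilbert-symbol computation shows $(-1,-C)_{\Q}$ is ramified exactly at $\{2,\infty\}$ (\resp $\{11,\infty\}$). As both fields are totally imaginary the archimedean place is harmless, and the needed even local degree at the single finite ramified prime is already part of the computation for (4); thus $(-1,-C)_k$ splits and $M^B(k)\ne\emptyset$. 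Finally, (5) follows by feeding (1), (2), (4) and the Galois property of $k$ into Theorem \ref{mainthm}(1): for the prime $q$ of (2) we are in the case $B\otimes_{\Q}k\cong\M_2(k)$, so $M_0^B(p)(k)=\emptyset$ for all $p$ meeting the finitely many conditions $p>4q$, $p\ge 11$, $p\ne 13$, $p\nmid d$, $p\notin\cL(k)$, that is, for every sufficiently large prime $p$.
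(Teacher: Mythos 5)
The paper states this proposition without proof, so there is no argument of the author's to compare against; judged on its own, your proposal is correct, and it is surely the intended verification: parts (1)--(4) check the hypotheses of Theorem \ref{mainthm}(1), which then yields (5). Your reductions are all sound — $H_L\subseteq k$ forces $L\subseteq k$, so (1) reduces to the two imaginary quadratic subfields $\Q(\sqrt{-5})$, $\Q(\sqrt{-35})$ of the biquadratic field (both of class number $2$, with genus class fields $\Q(i,\sqrt{5})$ and $\Q(\sqrt{5},\sqrt{-7})$ not equal to $k$) and to the nonexistence of imaginary quadratic subfields of $\Q(\zeta_{13})$; the Hasse-invariant criterion you state for (2) and (4) is the right one; and for (3) the identification of the conic $x^2+y^2+C=0$ with the algebra $(-1,-C)_k$, combined with the genus-$0$ remark already made in Section \ref{Ex}, is exactly what is needed.

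The only incomplete step is the finite search in (2), which you flag as laborious but which is in fact very short, so let me close it. For $k=\Q(\sqrt{-5},\sqrt{7})$ the completely split primes are those with $\left(\frac{-5}{q}\right)=\left(\frac{7}{q}\right)=1$; below $29$ the only one is $q=3$, and it fails for both $d$: in $\Q(\sqrt{-3})$ the primes $2$, $5$, $11$ are all inert (since $-3\equiv 5\bmod 8$, $\left(\frac{-3}{5}\right)=\left(\frac{2}{5}\right)=-1$, $\left(\frac{-3}{11}\right)=\left(\frac{2}{11}\right)=-1$), so $B\otimes_{\Q}\Q(\sqrt{-3})$ splits. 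At $q=29$ one has $\left(\frac{-29}{5}\right)=\left(\frac{1}{5}\right)=1$ and $\left(\frac{-29}{11}\right)=\left(\frac{4}{11}\right)=1$, so $5$ (\resp $11$) splits in $\Q(\sqrt{-29})$ and $q=29$ succeeds for $d=10$ (\resp $d=22$). For $k=\Q(\zeta_{13})$ the completely split primes are $q\equiv 1\bmod 13$, i.e.\ $53, 79, \dots$; at $q=53$ the prime $2$ ramifies in $\Q(\sqrt{-53})$ and $\left(\frac{-53}{5}\right)=\left(\frac{-53}{11}\right)=\left(\frac{2}{5}\right),\left(\frac{2}{11}\right)=-1$, so $53$ fails for both $d$, while $-79\equiv 1\bmod 8$ makes $2$ split in $\Q(\sqrt{-79})$, so $q=79$ succeeds for both. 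Likewise the even-local-degree checks for (3)--(4) come out as you predict: in $\Q(\sqrt{-5},\sqrt{7})$ the local degrees at $2$, $5$, $11$ are $4$, $4$, $2$, and in $\Q(\zeta_{13})$ they are $12$, $4$, $12$ (the orders of $2$, $5$, $11$ modulo $13$), all even, and both fields are totally imaginary. With these numbers inserted, your proof is complete.
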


\begin{rmk}
\label{MBkempty}

If $d=6$ and $k\in\{\Q(\sqrt{-5},\sqrt{7}), \Q(\zeta_{13})\}$,
then $M^B(k)=\emptyset$.
In this case $M_0^B(p)(k)=\emptyset$ for any prime number $p$
(not dividing $d$).

\end{rmk}

\def\bibname{References}

(Keisuke Arai)
Department of Mathematics, School of Engineering,
Tokyo Denki University,
5 Senju Asahi-cho, Adachi-ku, Tokyo 120-8551 Japan

\textit{E-mail address}: \texttt{araik@mail.dendai.ac.jp}

\end{document}